\newtheorem{de}{Definition}[section]
\newtheorem{lm}[de]{Lemma}
\newtheorem{pr}[de]{Proposition}
\newtheorem{te}[de]{Theorem}
\begin{document}
\title[Random Polymer]
{Diffusivity of Rescaled Random Polymer in Random Environment in dimensions $1$ and $2$}
\author {Zi Sheng Feng}
\address{zisheng.feng@utoronto.ca\linebreak
Department of Mathematics\\
University of Toronto\\
Toronto, Ontario, Canada, M5S 2E4}

\date{}
\begin{abstract} We show random polymer is diffusive in dimensions $1$ and $2$ in probability in an intermediate scaling regime. The scale is $\beta=
o(N^{-1/4})$ in $d=1$ and $\beta=o((\log N)^{-1/2})$ in $d=2$ as $N\rightarrow \infty$.
\end{abstract}
\maketitle

\section{Introduction}
Consider walks $\omega: [0,N]\bigcap \mathbb{Z}\rightarrow \mathbb{Z}^d$ such that $\omega(0)=0$, $|\omega(n)-\omega(n-1)|=1$. Let $P^N_0$ be uniform measure on the space of these walks each with weight $(2d)^{-N}$, then $$p_0(N,x):=P^N_0(\omega(N)=x)=\int 1_{[\omega(N)=x]}dP^N_0(\omega)=\frac{1}{(2d)^N}\sum_{\omega:\ \omega(N)=x}$$ is probability of the nearest neighbor simple random walk starting at $0$ is at site $x$ at time $N$.

Let the random environment be given by $h=\{h(n,x): n\in \mathbb{N}, x\in \mathbb{Z}^d\}$, a sequence of independent identically distributed random variables with $h(n,x)=\pm 1$ with equality probability on some probability space $(H,\mathcal{G}, Q)$, which are also independent of the simple random walk. We denote expectation over the environment space by $E_Q$.

We define the (unnormalized) polymer density by \begin{eqnarray*}p(N,x)=\int 1_{[\omega(N)=x]}\prod_{1\leq n\leq N}\left[1+c_{N,d}h(n,\omega(n))\right]dP^N_0(\omega)\end{eqnarray*}  where $c_{N,d}$\footnote{For example, we may take $c_{N,1}=N^{-(1/4+\epsilon)}$ and $c_{N,2}=\log N^{-(1/2+\epsilon)}$ for any $\epsilon>0$. Also the scale $\beta=o(N^{-1/4})$ for $d=1$ is first identified in \cite{AKQ}} is such that \begin{eqnarray}\label{SLG}\lim_{N\rightarrow \infty}c^2_{N,1}N^{1/2}=0\ {\mbox{for}}\ d=1;\ \lim_{N\rightarrow\infty}c^2_{N,2}\log N=0\ {\mbox{for}}\ d=2\end{eqnarray}

Since the polymer density is not normalized, to obtain the probability of the polymer at time $N$ is at site $x$, we define $$p_N(N,x)=p(N,x)/Z(N)$$ where $Z(N)$ is the partition function $$Z(N)=\sum_xp(N,x)=\int \prod_{1\leq n\leq N}\left[1+c_{N,d}h(n,\omega(n))\right]dP^N_0(\omega)$$

In this paper, we show the mean square displacement of the polymer when scaled by $N$ converges to $1$ in probability in both $d=1,2$. Precisely, let $\langle \omega(N)^2\rangle_{N,h}=\sum_x x^2p_N(N,x)$,
\begin{te}\label{MTHM} With rescaling of the polymer density by $c_{N,d}$, for $d=1,2$, $$\frac{\langle \omega(N)^2\rangle_{N,h}}{N}\rightarrow 1$$ in probability as $N\rightarrow \infty$. \end{te}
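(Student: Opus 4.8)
The plan is to control the unnormalized numerator $W(N):=\sum_x|x|^2p(N,x)=\int|\omega(N)|^2\prod_{n=1}^N\big(1+c_Nh(n,\omega(n))\big)\,dP^N_0(\omega)$ and the partition function $Z(N)$ separately in $L^2(Q)$, and then to pass to the ratio $\langle\omega(N)^2\rangle_{N,h}/N=W(N)/(N\,Z(N))$ (here $c_N:=c_{N,d}$ and $|x|^2=x_1^2+\cdots+x_d^2$). Since $E_Qh(n,x)=0$ and the $h(n,x)$ are independent, $E_Q[Z(N)]=1$, and since $|\omega(N)|^2$ does not involve $h$, $E_Q[W(N)]=E_0[|\omega(N)|^2]=N$ (each step of $\omega$ is a unit coordinate vector). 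The crux is to prove $\mathrm{Var}_Q(Z(N))\to0$ and $\mathrm{Var}_Q(W(N))=o(N^2)$; given this, $Z(N)\to1$ and $W(N)/N\to1$ in $L^2(Q)$, hence in $Q$-probability, and (as $Z(N)>0$ for $N$ large, since $c_N\to0$) the continuous mapping theorem gives $W(N)/(N\,Z(N))\to1$ in $Q$-probability, which is the theorem.

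For the variances I use the two-replica representation. Let $\omega,\tilde\omega$ be independent under $P^N_0\otimes P^N_0$, put $\eta(n)=\omega(n)-\tilde\omega(n)$, and use $E_Q\big[(1+c_Nh(n,x))(1+c_Nh(n,y))\big]=1+c_N^2\mathbf 1_{x=y}$. Fubini and independence of $h$ in $n$ give
\begin{eqnarray*}
\mathrm{Var}_Q(Z(N))&=&E_0^{\otimes2}\Big[\prod_{n=1}^N\big(1+c_N^2\mathbf 1_{\eta(n)=0}\big)\Big]-1,\\
\mathrm{Var}_Q(W(N))&=&E_0^{\otimes2}\Big[|\omega(N)|^2|\tilde\omega(N)|^2\Big(\prod_{n=1}^N\big(1+c_N^2\mathbf 1_{\eta(n)=0}\big)-1\Big)\Big].
\end{eqnarray*}
Expanding $\prod_{n=1}^N(1+c_N^2\mathbf 1_{\eta(n)=0})-1=\sum_{k\geq1}c_N^{2k}\sum_{A}\prod_{n\in A}\mathbf 1_{\eta(n)=0}$ over $A=\{n_1<\cdots<n_k\}\subseteq\{1,\dots,N\}$, and using homogeneity of $\eta$, with $n_0:=0$, $\ell_j:=n_j-n_{j-1}$ and $u(m):=P^{\otimes2}_0(\eta(m)=0)$, one gets $E_0^{\otimes2}[\prod_{n\in A}\mathbf 1_{\eta(n)=0}]=\prod_{j=1}^ku(\ell_j)$. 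From $u(m)\leq\max_xP_0(\omega(m)=x)\leq C_dm^{-d/2}$ we obtain $c_N^2\sum_{m=1}^Nu(m)\leq C_d'c_N^2a_N$ with $a_N=N^{1/2}$ for $d=1$ and $a_N=\log N$ for $d=2$, which tends to $0$ by \eqref{SLG}; summing the geometric series in $k$ yields $\mathrm{Var}_Q(Z(N))\leq C_d'c_N^2a_N/(1-C_d'c_N^2a_N)\to0$.

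For $\mathrm{Var}_Q(W(N))$ I estimate the $A$-term $E_0^{\otimes2}[|\omega(N)|^2|\tilde\omega(N)|^2\prod_{n\in A}\mathbf 1_{\eta(n)=0}]$ and then sum as before. Conditioning on the two walks up to time $n_k$ (the $\sigma$-algebra $\mathcal F_{n_k}$): on $\{\eta(n_j)=0,\ 1\leq j\leq k\}$ one has $\omega(n_k)=\tilde\omega(n_k)$, and the increments $\omega(N)-\omega(n_k)$, $\tilde\omega(N)-\tilde\omega(n_k)$ are independent of each other and of $\mathcal F_{n_k}$, centered with second moment $N-n_k$; hence on that event $E_0^{\otimes2}[|\omega(N)|^2|\tilde\omega(N)|^2\mid\mathcal F_{n_k}]=(|\omega(n_k)|^2+(N-n_k))^2$, so the $A$-term is at most $2E_0^{\otimes2}[|\omega(n_k)|^4\prod_{n\in A}\mathbf 1_{\eta(n)=0}]+2N^2\prod_ju(\ell_j)$. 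On $\{\eta(n_j)=0\ \forall j\}$ the block increments $B_j:=\omega(n_j)-\omega(n_{j-1})$ satisfy $B_j=\tilde B_j$, so for $g$ a function of $(B_1,\dots,B_k)$ only, $E_0^{\otimes2}[g\prod_{n\in A}\mathbf 1_{\eta(n)=0}]=\big(\prod_ju(\ell_j)\big)\widehat E[g(\widehat B_1,\dots,\widehat B_k)]$, where the $\widehat B_j$ are independent and symmetric with $\widehat P(\widehat B_j=b)=P_0(\omega(\ell_j)=b)^2/u(\ell_j)$. Using $u(\ell)\geq c_d\ell^{-d/2}$ (Cauchy--Schwarz, via $E_0|\omega(\ell)|^2=\ell$) and $\sum_bP_0(\omega(\ell)=b)^2|b|^{2m}\leq\max_bP_0(\omega(\ell)=b)\cdot E_0|\omega(\ell)|^{2m}\leq C\ell^{m-d/2}$ for $m=1,2$ (with $E_0|\omega(\ell)|^4\leq3\ell^2$), one gets $\widehat E|\widehat B_j|^2\leq C\ell_j$ and $\widehat E|\widehat B_j|^4\leq C\ell_j^2$; since $\omega(n_k)=\sum_jB_j$ with the $\widehat B_j$ independent and centered, $\widehat E|\sum_j\widehat B_j|^4\leq C_d\big[(\sum_j\widehat E|\widehat B_j|^2)^2+\sum_j\widehat E|\widehat B_j|^4\big]\leq Cn_k^2\leq CN^2$. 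Thus the $A$-term is $\leq CN^2\prod_ju(\ell_j)$, and summing over $A$ exactly as for $Z(N)$ gives $\mathrm{Var}_Q(W(N))\leq CN^2\sum_{k\geq1}(C_d'c_N^2a_N)^k=O(N^2c_N^2a_N)=o(N^2)$ by \eqref{SLG}.

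The step I expect to be the main obstacle is precisely the displacement estimate $E_0^{\otimes2}[|\omega(n_k)|^4\prod_{n\in A}\mathbf 1_{\eta(n)=0}]\leq CN^2\prod_ju(\ell_j)$: showing that the quartic displacement factor costs only a constant, not a power of $N$, relative to the pure directed-polymer $L^2$ sum $\sum_Ac_N^{2|A|}\prod_ju(\ell_j)$. This rests on the two-sided on-diagonal bound $c_dm^{-d/2}\leq u(m)\leq C_dm^{-d/2}$ and on the fourth-moment control of sums of the constrained increments $\widehat B_j$. Once these are in place, the remainder is the classical second-moment computation for the directed polymer in the subcritical regime \eqref{SLG}.
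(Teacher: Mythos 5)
Your proposal is correct, and its overall architecture coincides with the paper's: write $\langle\omega(N)^2\rangle_{N,h}=K(N)/Z(N)$ (your $W(N)$ is the paper's $K(N)$), show $E_QZ(N)=1$, $E_QK(N)=N$, control both variances by a second-moment (two-replica/chaos) expansion, bound the collision probability $u(m)=\sum_xp_0^2(m,x)$ by $C_dm^{-d/2}$, sum the resulting geometric series in $c_N^2a_N$ using \eqref{SLG}, and pass to the ratio via convergence in probability of $1/Z(N)$ and $K(N)/N$. The treatment of $\mathrm{Var}_Q(Z(N))$ is identical to Lemmas \ref{2}, \ref{4}, \ref{5}, \ref{17}, \ref{18}. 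Where you genuinely diverge is the step you yourself flag as the obstacle, namely showing that the displacement factor $|\omega(N)|^2|\tilde\omega(N)|^2$ costs only $O(N^2)$ times the pure $L^2$ sum. The paper handles this by first peeling one factor of each $p_0^2$ off with the sup bound $p_0(m,\cdot)\leq C_dm^{-d/2}$, and then computing the remaining expectation \emph{exactly} by induction along the chain of transition kernels, using closed-form second and fourth moments of the simple walk (Lemmas \ref{6}--\ref{9} for $d=1$, Lemmas \ref{19}--\ref{21} for $d=2$), ending with the crude but sufficient bound $100^nN^2$. You instead condition on $\mathcal F_{n_k}$, factor the constrained block increments into a normalized ``squared'' law $\widehat P(\widehat B_j=b)=p_0(\ell_j,b)^2/u(\ell_j)$, and apply a Rosenthal-type fourth-moment inequality; this is slicker and gives a constant uniform in $k$ rather than exponential in $k$, and it avoids dimension-specific exact moment formulas, but it requires one input the paper never needs, the matching lower bound $u(\ell)\geq c_d\ell^{-d/2}$ (which your Chebyshev--Cauchy--Schwarz argument does supply). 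Both routes are valid; the paper's is more elementary and entirely explicit, yours is more robust and would generalize more readily beyond the nearest-neighbour walk.
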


We note that $\langle \omega(N)^2\rangle_{N,h}=\frac{K(N)}{Z(N)}$, where $K(N)=\int \prod_{1\leq n\leq N}\left[1+\beta_{N,d}h(n,\omega(n))\right]\omega(N)^2dP^N_0(\omega)$. To show the result, we are going to estimate second moment of the top and bottom quantity, and find that

\begin{pr}\label{d12M} For $d=1$, $$i)\ E_Q(Z(N)^2)\leq \sum^N_{n=0}\left(c_1 c^2_{N,1}N^{1/2}\right)^n;\ \ \ ii)\ E_Q(K(N)^2)\leq N^2\sum^N_{n=0}\left(c_1 c^2_{N,1}N^{1/2}\right)^n$$  for some constant $c_1$ that depends only on the dimension. \end{pr}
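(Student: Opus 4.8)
The plan is to use the ``replica'' trick: square $Z(N)$ and $K(N)$ against a second walk $\omega'$, independent of $\omega$ and also distributed according to $P^N_0$; average over the environment $h$ first; and then reduce both second moments to a sum over the contact set $\{n:\omega(n)=\omega'(n)\}$, whose terms are controlled by the one-dimensional local central limit theorem.

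First I would integrate out the environment. Write $c=c_{N,1}$ and let $E^{\otimes2}$ denote expectation over the pair $(\omega,\omega')$ of independent $P^N_0$-walks. Then $Z(N)^2=E^{\otimes2}\big[\prod_{n=1}^N(1+ch(n,\omega(n)))(1+ch(n,\omega'(n)))\big]$, and since the $h(n,x)$ are i.i.d.\ with $E_Q h=0$, $E_Q h^2=1$, and independent of the walks, the $h$-average of the $n$-th pair of factors is $1$ when $\omega(n)\neq\omega'(n)$ and $1+c^2$ when $\omega(n)=\omega'(n)$; hence
$$E_Q\big(Z(N)^2\big)=E^{\otimes2}\Big[\prod_{n=1}^N\big(1+c^2\,1_{[\omega(n)=\omega'(n)]}\big)\Big],\qquad E_Q\big(K(N)^2\big)=E^{\otimes2}\Big[\omega(N)^2\omega'(N)^2\prod_{n=1}^N\big(1+c^2\,1_{[\omega(n)=\omega'(n)]}\big)\Big].$$
Expanding the product over $n$ gives
$$E_Q\big(Z(N)^2\big)=\sum_{k=0}^N c^{2k}\!\!\sum_{1\le n_1<\cdots<n_k\le N}\!\! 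P^{\otimes2}\big(\omega(n_i)=\omega'(n_i),\ i\le k\big),$$
and the same identity for $K(N)^2$ with $P^{\otimes2}(\cdots)$ replaced by $E^{\otimes2}\big[\omega(N)^2\omega'(N)^2\prod_{i\le k}1_{[\omega(n_i)=\omega'(n_i)]}\big]$.

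Next I would estimate one such term. Fix $1\le n_1<\cdots<n_k\le N$, set $n_0=0$ and $m_i=n_i-n_{i-1}\ge1$. Conditioning on $\omega$ and using the Markov property of $\omega'$, the chance that $\omega'$ passes through $\omega(n_1),\dots,\omega(n_k)$ at times $n_1,\dots,n_k$ is $\prod_{i=1}^k p_0(m_i,\omega(n_i)-\omega(n_{i-1}))$, and the $d=1$ local CLT bounds each factor by $c_0 m_i^{-1/2}$ for a universal $c_0$; since this bound is uniform in $\omega$, $P^{\otimes2}(\omega(n_i)=\omega'(n_i),\ i\le k)\le\prod_{i=1}^k c_0 m_i^{-1/2}$. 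For $K$ the crucial point is to resist bounding $\omega(N)^2\omega'(N)^2$ by $N^4$ (or $N^2\omega'(N)^2$), each of which costs a spurious factor $N$ against the target $N^2$; instead, $ab\le\tfrac{1}{2}(a^2+b^2)$ together with the symmetry of $\prod_{i\le k}1_{[\omega(n_i)=\omega'(n_i)]}$ under $\omega\leftrightarrow\omega'$ in $P^{\otimes2}$ gives
$$E^{\otimes2}\Big[\omega(N)^2\omega'(N)^2\prod_{i\le k}1_{[\omega(n_i)=\omega'(n_i)]}\Big]\le E^{\otimes2}\Big[\omega(N)^4\prod_{i\le k}1_{[\omega(n_i)=\omega'(n_i)]}\Big]\le E\big[\omega(N)^4\big]\prod_{i=1}^k c_0 m_i^{-1/2}\le 3N^2\prod_{i=1}^k c_0 m_i^{-1/2},$$
since $E[\omega(N)^4]=3N^2-2N$ for $d=1$ simple random walk; and for $k=0$ this term equals $E^{\otimes2}[\omega(N)^2\omega'(N)^2]=N^2$ exactly.

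Finally I would sum over the $n_i$'s and over $k$. As $m_1,\dots,m_k\ge1$ with $\sum_i m_i\le N$, $\sum_{1\le n_1<\cdots<n_k\le N}\prod_i m_i^{-1/2}\le\big(\sum_{m=1}^N m^{-1/2}\big)^k\le(2\sqrt N)^k$, so the $k$-th term of $E_Q(Z(N)^2)$ is at most $(2c_0c^2\sqrt N)^k$, and for $k\ge1$ the $k$-th term of $E_Q(K(N)^2)$ is at most $3N^2(2c_0c^2\sqrt N)^k$. Hence $E_Q(Z(N)^2)\le\sum_{k=0}^N(2c_0c^2\sqrt N)^k$, and, using $3\le3^k$ for $k\ge1$, $E_Q(K(N)^2)\le N^2+3N^2\sum_{k=1}^N(2c_0c^2\sqrt N)^k\le N^2\sum_{k=0}^N(6c_0c^2\sqrt N)^k$; taking $c_1=6c_0$, which depends only on the dimension, gives both (i) and (ii). I expect the only genuinely delicate step to be the one flagged above --- extracting $E[\omega(N)^4]=O(N^2)$ through the AM--GM and symmetry move rather than using the crude pointwise bound on $\omega(N)^2$; the replica computation and the uniform local CLT estimate $p_0(m,x)\le c_0 m^{-1/2}$, which makes $\sum_{m\le N}m^{-1/2}=O(\sqrt N)$ and closes the geometric series, are otherwise routine.
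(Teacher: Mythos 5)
Your proposal is correct. For part (i) it is essentially the paper's own argument: averaging over the environment produces exactly the contact--set expansion of Lemma \ref{2}, the uniform local CLT bound $p_0(m,x)\le c_0m^{-1/2}$ reproduces Lemma \ref{4}, and the sum over $1\le n_1<\cdots<n_k\le N$ via $\sum_{m\le N}m^{-1/2}\le 2\sqrt N$ reproduces Lemma \ref{5}. For part (ii), however, you take a genuinely different and shorter route. The paper carries the weight $\left(\sum_x x^2p_0(N-i_n,x-x_n)\right)^2$ through the expansion (Lemma \ref{3}) and then must evaluate $\sum_{x_1,\dots,x_n}\prod_k p_0\cdot\left((N-i_n)+x_n^2\right)^2$ exactly: this occupies Lemmas \ref{6}, \ref{7} and the induction of Lemma \ref{8}, after which the resulting polynomial in the $i_k$'s is bounded by $100^nN^2$ in Lemma \ref{9} and everything is assembled in Lemma \ref{10}. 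You instead keep the factor $\omega(N)^2\omega'(N)^2$, apply $ab\le\tfrac12(a^2+b^2)$ together with the $\omega\leftrightarrow\omega'$ symmetry of the contact indicators, and then exploit the fact that the conditional probability $\prod_i p_0(m_i,\omega(n_i)-\omega(n_{i-1}))$ is bounded uniformly in $\omega$, so that $E[\omega(N)^4]=3N^2-2N\le 3N^2$ factors out cleanly. This one symmetrization step replaces four lemmas of explicit moment computations, at the cost only of a slightly larger constant ($6c_0$ in place of the paper's $c_1$), which is immaterial since the statement allows any dimension-dependent constant. Both routes give the same bound; yours is the more economical and transfers verbatim to $d=2$ with $m^{-1/2}$ replaced by $m^{-1}$ and $E[|\omega(N)|^4]=2N^2-N$, where the paper again repeats the analogous explicit computation in Lemmas \ref{19}--\ref{22}.
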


\begin{pr}\label{d22M} For $d=2$, $$i)\ E_Q(Z(N)^2)\leq \sum^N_{n=0}\left(c_2 c^2_{N,2}\log N\right)^n;\ \ \ ii)\ E_Q(K(N)^2)\leq N^2\sum^N_{n=0}\left(c_2 c^2_{N,2}\log N\right)^n$$ for some constants $c_2$ that depends only on the dimension. \end{pr}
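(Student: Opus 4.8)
\emph{Proof plan.} The plan is to work with the two-replica representation. Write $c=c_{N,d}$ and let $\omega,\omega'$ be two independent walks under $P^N_0$, with $E^{\otimes 2}_0$ the product expectation. Expanding the square and integrating the environment out first, the key point is that the variables $h(n,\cdot)$ are independent across $n$ and across sites, with $E_Q h=0$ and $E_Q h^2=1$, so for every $n$ one has $E_Q[(1+c\,h(n,\omega(n)))(1+c\,h(n,\omega'(n)))]=1+c^2\mathbf 1_{[\omega(n)=\omega'(n)]}$, and multiplying over $n$ (Fubini) gives
\[E_Q[Z(N)^2]=E^{\otimes 2}_0\big[(1+c^2)^{L_N}\big],\qquad E_Q[K(N)^2]=E^{\otimes 2}_0\big[(1+c^2)^{L_N}\,|\omega(N)|^2|\omega'(N)|^2\big],\]
where $L_N=\#\{1\le n\le N:\omega(n)=\omega'(n)\}$ is the number of collisions of the two walks. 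Since $L_N\le N$ I then expand $(1+c^2)^{L_N}=\sum_{k=0}^{N}c^{2k}\binom{L_N}{k}$ and write $\binom{L_N}{k}=\sum_{1\le n_1<\dots<n_k\le N}\mathbf 1_{[\omega(n_j)=\omega'(n_j)\ \forall j]}$; by the Markov property for the difference walk $\omega-\omega'$, the collision event at fixed $n_1<\dots<n_k$ has probability $\prod_{j=1}^{k}q(n_j-n_{j-1})$, with $n_0:=0$ and $q(m):=\sum_x p_0(m,x)^2=p_0(2m,0)$.

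The only random-walk input I will need is the local CLT bound $\sup_x p_0(m,x)\le C_d\,m^{-d/2}$, which gives $q(m)\le C_d m^{-d/2}$, hence $\sum_{m=1}^N q(m)\le c_2\log N$ when $d=2$ (and $\le c_1 N^{1/2}$ when $d=1$); combined with the elementary moment bounds $E_0|\omega(m)|^{2j}\le C_j m^{j}$ it also gives $\sum_x|x|^{2j}p_0(m,x)^2\le \sup_x p_0(m,x)\cdot E_0|\omega(m)|^{2j}\le C\,m^{\,j-d/2}$, so in particular $\sum_x|x|^4p_0(m,x)^2\le C m$ for $d=2$. Part (i) of both propositions is then immediate: bounding the ordered sum by the product of one-variable sums,
\[E_Q[Z(N)^2]=\sum_{k=0}^N c^{2k}\!\!\!\sum_{1\le n_1<\dots<n_k\le N}\ \prod_{j=1}^k q(n_j-n_{j-1})\ \le\ \sum_{k=0}^N c^{2k}\Big(\sum_{m=1}^N q(m)\Big)^{k},\]
which is the claimed geometric series.

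For part (ii) I fix $k\ge1$ and times $n_1<\dots<n_k$ and condition on the two walks up to time $n_k$. On the collision event $\omega(n_k)=\omega'(n_k)=:W$, and the increments of $\omega,\omega'$ after time $n_k$ are independent of $\mathcal F_{n_k}$ and of each other, so on that event $E^{\otimes 2}_0[|\omega(N)|^2|\omega'(N)|^2\mid\mathcal F_{n_k}]=(|W|^2+N-n_k)^2\le 2|W|^4+2N^2$. Thus it suffices to bound $E^{\otimes 2}_0[\mathbf 1_{[\text{coincide}]}|\omega(n_k)|^4]$. Conditioning further on the walks at $n_1,\dots,n_{k-1}$, writing $|W|^4=|w+z|^4\le 8|w|^4+8|z|^4$ with $w=\omega(n_k)-\omega(n_{k-1})$ and $z=\omega(n_{k-1})$, and iterating, one obtains
\[E^{\otimes 2}_0\big[\mathbf 1_{[\text{coincide}]}|\omega(n_k)|^4\big]\ \le\ 8^{\,k}\sum_{i=1}^{k}\Big(\sum_x|x|^4 p_0(n_i-n_{i-1},x)^2\Big)\prod_{j\ne i}q(n_j-n_{j-1}).\]
Summing over $1\le n_1<\dots<n_k\le N$ and using the two random-walk estimates above, the right side is (for $d=2$) of order $8^{k}k\,N^2(\log N)^{k-1}$, so that $a_k:=E^{\otimes 2}_0[\binom{L_N}{k}|\omega(N)|^2|\omega'(N)|^2]\le N^2(c_2\log N)^k\,P(k)\,R^{k}$ for a fixed polynomial $P$ and constant $R$ depending only on $d$. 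Since $a_0=E^{\otimes 2}_0[|\omega(N)|^2|\omega'(N)|^2]=N^2$ exactly, and for $k\ge1$ the factor $P(k)R^k$ can be absorbed into $c_2^k$ by enlarging the dimensional constant $c_2$ (because $P(k)^{1/k}$ is bounded over $k\ge1$), summing in $k$ gives $E_Q[K(N)^2]\le N^2\sum_{k=0}^N(c_2\,c^2\log N)^k$. The case $d=1$ is identical with $N^{1/2}$ replacing $\log N$ throughout.

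The step I expect to be the main obstacle is the bound on $E^{\otimes 2}_0[\mathbf 1_{[\text{coincide}]}|\omega(n_k)|^4]$: the trivial estimate $|\omega(n_k)|\le n_k\le N$ would only give $N^4$ in front of $K(N)^2$, and one has to use instead that on the collision event $\omega(n_k)$ is a sum of the inter-collision increments and that the local CLT forces $\sum_x|x|^4 p_0(m,x)^2\le C m^{2-d/2}$, which is exactly borderline for producing $N^2$ after summing over the collision times. The other point requiring care is tracking the $k$-dependent constants well enough that they are swallowed by $c^{2k}$ — using crucially that the $k=0$ term is already $N^2$ on the nose, so no polynomial-in-$k$ loss is incurred there.
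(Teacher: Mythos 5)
Your argument is correct. For part (i) it is the paper's proof in different packaging: writing $E_Q[Z(N)^2]=E^{\otimes 2}_0[(1+c^2)^{L_N}]$ and expanding binomially over the collision times recovers exactly the chaos expansion of Lemma \ref{2}, and your bound $q(m)=\sum_x p_0(m,x)^2\le C_d m^{-d/2}$ followed by the product-of-one-variable-sums estimate is precisely Lemmas \ref{17} and \ref{18}. For part (ii) you genuinely diverge. The paper keeps everything exact as long as possible: it computes $\sum_x|x|^2p_0(N-i_n,x-x_n)=(N-i_n)+|x_n|^2$ (Lemma \ref{19}), pulls one factor of each $p_0^2$ out via the sup bound so that the remaining single $p_0$'s form a probability, and then evaluates $\sum\prod p_0\cdot\bigl((N-i_n)+|x_n|^2\bigr)^2$ in closed form by an induction over the collision times (Lemmas \ref{20}--\ref{21}), finally bounding the resulting explicit polynomial in the $i_k$ by $c^nN^2$ as in Lemma \ref{9}. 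You instead condition at the last collision time, reduce to $E^{\otimes 2}_0[\mathbf 1_{[\mathrm{coincide}]}|\omega(n_k)|^4]$, and control that via the subadditive bound $|x_k|^4\le 8^k\sum_i|x_i-x_{i-1}|^4$ together with $\sum_x|x|^4p_0(m,x)^2\le C m^{2-d/2}$; the sum factorizes over increments exactly as you say. Your route avoids the exact moment induction at the price of the combinatorial factors $8^k k$, which you correctly absorb into the geometric ratio --- harmless precisely because the statement allows the dimensional constant $c_2$ to be enlarged and because the $k=0$ term is $N^2$ on the nose. Two small points to record if you write this up: replacing $(\log N)^{k-1}$ by $(\log N)^k$ requires $\log N\ge 1$, i.e. $N\ge 3$; and the ``difference walk'' $\omega-\omega'$ is not itself a simple random walk, though its Markov property and the identity $P((\omega-\omega')(m)=0)=\sum_x p_0(m,x)^2$ are all you actually use.
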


The paper is organized as follows. In section 2, we write out second moments of the top and bottom quantity in the mean square displacement of the polymer. In Section 3, we show Proposition \ref{d12M} and Theorem \ref{MTHM} for dimension $d=1$. In Section 4, we show Proposition \ref{d22M} and Theorem \ref{MTHM} for dimension $d=2$. In Section 5, we show some other results.

\section{Second Moment Expansions}
In this section, we are going to write out the second moments of the top and bottom quantity in the mean square displacement of the polymer.

\begin{lm}\label{2} \begin{eqnarray*}E_Q(Z^2(N))=\sum^N_{n=0}\sum_{1\leq i_1< \cdots <i_n\leq N}c^{2n}_{N,d}\sum_{x_1,\ldots, x_n}\prod^n_{k=1}p^2_0(i_k-i_{k-1}, x_k-x_{k-1})\end{eqnarray*} \end{lm}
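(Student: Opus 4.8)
The plan is to expand $Z(N)^2$ using two independent copies of the walk and then integrate out the environment. Writing $\omega$ and $\omega'$ for independent walks distributed according to $P^N_0$, independent also of $h$, Fubini gives
\begin{eqnarray*}E_Q\bigl(Z(N)^2\bigr)=\int\int E_Q\Bigl[\prod_{n=1}^N\bigl[1+c_{N,d}h(n,\omega(n))\bigr]\bigl[1+c_{N,d}h(n,\omega'(n))\bigr]\Bigr]\,dP^N_0(\omega)\,dP^N_0(\omega'),\end{eqnarray*}
all sums and integrals being over finite sets so that the interchange is harmless.

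Next I would compute the inner environment expectation for fixed trajectories $\omega,\omega'$. Since the $h(n,x)$ are independent over $(n,x)$ with $E_Q[h(n,x)]=0$ and $E_Q[h(n,x)^2]=1$, the expectation factors over the time index $n$; for each $n$ the corresponding factor equals $1$ when $\omega(n)\neq\omega'(n)$ (a product of two independent mean-zero variables) and equals $1+c_{N,d}^2$ when $\omega(n)=\omega'(n)$. Hence
\begin{eqnarray*}E_Q\Bigl[\prod_{n=1}^N\bigl[1+c_{N,d}h(n,\omega(n))\bigr]\bigl[1+c_{N,d}h(n,\omega'(n))\bigr]\Bigr]=\prod_{n=1}^N\bigl(1+c_{N,d}^2\,1_{[\omega(n)=\omega'(n)]}\bigr).\end{eqnarray*}

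Then I would expand this product over subsets $S\subseteq\{1,\dots,N\}$ as $\sum_S c_{N,d}^{2|S|}\prod_{m\in S}1_{[\omega(m)=\omega'(m)]}$ and interchange this finite sum with the double walk integral. For a fixed $S=\{i_1<\cdots<i_n\}$, decomposing according to the common values of the two walks at the times of $S$ gives the contribution $\sum_{x_1,\dots,x_n}P^N_0\bigl(\omega(i_k)=x_k,\ k=1,\dots,n\bigr)^2$, since the factor $P^N_0(\cdots)^2$ records the requirement that each of the two independent walks passes through $x_1,\dots,x_n$ at the times $i_1,\dots,i_n$. By the stationary independent increments (Markov) property of the simple random walk, and using that the unconstrained stretch from time $i_n$ to $N$ contributes a factor $1$, one factors $P^N_0(\omega(i_k)=x_k,\ k=1,\dots,n)=\prod_{k=1}^n p_0(i_k-i_{k-1},x_k-x_{k-1})$ with the convention $i_0=0$, $x_0=0$. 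Grouping the subsets $S$ according to their cardinality $n$ then yields exactly the claimed identity, the $n=0$ term being the constant $1$.

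This is essentially a bookkeeping computation, and I do not expect a genuine obstacle. The one step that requires care is the environment expectation: one must simultaneously use the independence of $\{h(n,\cdot)\}$ across distinct times $n$ (so that the product over $n$ factors) and, at each time, the dichotomy $\omega(n)=\omega'(n)$ versus $\omega(n)\neq\omega'(n)$, together with the normalization $E_Q[h(n,x)^2]=1$ forced by the $\pm1$ law. The remaining effort is purely notational, namely keeping the multi-indices for the intersection times $i_1<\cdots<i_n$ and the common sites $x_1,\dots,x_n$ consistent.
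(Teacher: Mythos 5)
Your proof is correct, but it reverses the order of operations compared to the paper, and this is a genuine (and arguably cleaner) difference. The paper first expands the product $\prod_n[1+c_{N,d}h(n,\omega(n))]$ into its chaos series $Z_N=\sum_n g_n$, then computes $E_Q g_n g_m$ term by term; this forces it to argue combinatorially that off-diagonal terms vanish and that, within $E_Q g_n^2$, only the identity matching $i_k=i'_k$ of the two time multi-indices can contribute (cross-matchings being excluded by the ordering $i_1<\cdots<i_n$). You instead take the environment expectation first, at the level of the unexpanded product for fixed trajectories $\omega,\omega'$, obtaining $\prod_{n=1}^N\bigl(1+c_{N,d}^2\,1_{[\omega(n)=\omega'(n)]}\bigr)$, and only then expand over subsets $S\subseteq\{1,\dots,N\}$; the pairing combinatorics disappears entirely because the time indices are never duplicated. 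The remaining steps (conditioning on the common sites, the Markov factorization into $\prod_k p_0(i_k-i_{k-1},x_k-x_{k-1})$, and the observation that the final stretch sums to $1$) coincide with the paper's. What your route buys is a shorter and less error-prone computation of the left-hand side; what the paper's route buys is the explicit orthogonality $E_Q g_n g_m=0$ for $n\neq m$, which it reuses later (in the proof of Proposition \ref{31}), so if one adopted your proof one would still need to record that orthogonality separately for Section 5.
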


\begin{proof} By definition, $Z_N=\int \prod_{1\leq n\leq N}\left[1+c_{N,d}h(n,\omega(n))\right]dP^N_0(\omega)$. Upon expanding, $$Z_N=\int \sum^N_{n=0}\sum_{1\leq i_1<\cdots <i_n\leq N}c^n_{N,d}\prod^n_{k=1}h(i_k, \omega(i_k))dP^N_0(\omega)$$ Let $f_n(\omega)=\sum_{1\leq i_1<\cdots <i_n\leq N}c^n_{N,d}\prod^n_{k=1}h(i_k, \omega(i_k))$ and $g_n=\int f_n(\omega)dP^N_0(\omega)$, we see $$Z_N^2=(g_0+g_1+\cdots +g_N)^2=\sum_{0\leq n,m\leq N}g_{n}g_{m}$$

For $n\neq m$, we have \begin{eqnarray*}E_Qg_ng_m&=&E_Q\int\sum_{1\leq i_1<\cdots<i_n\leq N}c^n_{N,d}\prod^n_{k=1}h(i_k,\omega(i_k))dP^N_0(\omega)\int\sum_{1\leq i'_1<\cdots<i'_m\leq N}c^m_{N,d}\prod^m_{l=1}h(i'_l,\omega'(i'_l))dP^N_0(\omega')\end{eqnarray*}
Note that if there is some $i_k$ that is different from all other $i'_l$'s (or vice versa), then by independence of the $h(n,x)$'s and that they have mean $0$, we have $$E_Q\prod^n_{k=1}\prod^m_{l=1}h(i_k,\omega(i_k)) h(i'_l,\omega'(i'_l))=0$$
By Fubini, $E_Qg_ng_m=0$. But since $n\neq m$, the $i_k$'s and $i'_l$'s cannot all be matched in pairs, so there must be some $i_k$ different from all other $i'_l$'s (or vice versa).

On the other hand, for $n=m$, we have \begin{eqnarray*}E_Qg^2_n&=&E_Q\int\sum_{1\leq i_1<\cdots<i_n\leq N}c^n_{N,d}\prod^n_{k=1}h(i_k,\omega(i_k))dP^N_0(\omega)\int\sum_{1\leq i'_1<\cdots<i'_n\leq N}c^n_{N,d}\prod^n_{k=1}h(i'_k,\omega'(i'_k))dP^N_0(\omega')
\\&=&E_Q\int \int \sum_{1\leq i_1<\cdots <i_n\leq N}c^{2n}_{N,d}\prod^n_{k=1}h(i_k,\omega(i_k))h(i_k,\omega'(i_k))dP^N_0(\omega)dP^N_0(\omega')
\\&+&E_Q\int \int \sum_{i_l\neq i'_l\ \textrm{for some}\ l\ \in \{1,\ldots,n\}}c^{2n}_{N,d}\prod^n_{k=1}h(i_k,\omega(i_k))h(i'_k,\omega'(i'_k))dP^N_0(\omega)dP^N_0(\omega')
\\&=&E_Q\int \int \sum_{1\leq i_1<\cdots <i_n\leq N}c^{2n}_{N,d}\prod^n_{k=1}1_{\omega(i_k)=\omega'(i_k)}dP^N_0(\omega)dP^N_0(\omega')
\\&+&E_Q\int \int \sum_{i_l\neq i'_l\ \textrm{for some}\ l\ \in \{1,\ldots,n\}}c^{2n}_{N,d}\prod^n_{k=1}h(i_k,\omega(i_k))h(i'_k,\omega'(i'_k))dP^N_0(\omega)dP^N_0(\omega')
\\&=&\int\int \sum_{1\leq i_1<\cdots <i_n\leq N}c^{2n}_{N,d}\prod^n_{k=1}1_{\omega(i_k)=\omega'(i_k)}dP^N_0(\omega)dP^N_0(\omega')\end{eqnarray*} Third equality follows because $h^2(n,x)=1$ and nonzero contribution only comes from when all sites $\omega(i_k)$ and $\omega'(i_k)$ are matched in pairs. Fourth equality follows because if the $i_{\alpha}$'s were to match perfectly with the $i'_{\beta}$'s for $\alpha\neq \beta$, then we would get contradiction in terms of the order of the times. For example, take $n=3$ and the perfect cross matching $i_1=i'_2$, $i_2=i'_3$, $i_3=i'_1$, then by $i_1<i_2<i_3$ we would have $i'_2<i'_3<i'_1$, which is a contradiction.

Now, we are going to write out the integrals as sums in terms of the transition probabilities of the two independent walks. By above, we have $$E_Q(Z^2(N))=\sum^N_{n=0}\sum_{1\leq i_1< \cdots <i_n\leq N}c^{2n}_{N,d}\int\int 1_{\left[\omega(i_1)=\tilde{\omega}(i_1),\ldots, \omega(i_n)=\tilde{\omega}(i_n)\right]}dP^N_0(\omega)dP^N_0(\tilde{\omega})$$ $$=\sum^N_{n=0}\sum_{1\leq i_1< \cdots <i_n\leq N}c^{2n}_{N,d}\int \sum_{x_1,\ldots, x_n,x}1_{\left[\omega(i_1)=x_1,\ldots, \omega(i_n)=x_n\right]}$$ $$\times P^N_0\left(\tilde{\omega}(i_1)=x_1,\ldots,\tilde{\omega}(i_n)=x_n, \tilde{\omega}(N)=x\right)dP^N_0(\omega)$$ $$=\sum^N_{n=0}\sum_{1\leq i_1< \cdots <i_n\leq N}c^{2n}_{N,d}\int \sum_{x_1,\ldots, x_n}1_{\left[\omega(i_1)=x_1,\ldots, \omega(i_n)=x_n\right]}$$
 $$\times \prod^n_{k=1}p_0(i_k-i_{k-1}, x_k-x_{k-1})\sum_xp_0(N-i_n, x-x_n)dP^N_0(\omega)$$
where in the first equality we also need to sum over sites at time $N$ because $P^N_0$ is measure for walks of length $N$, and in the last equality we use the fact that increments of the walk are independent, and the walk is spatial homogeneous, i.e. probability of the walk starting at $y$ and ending at $x$ is same as probability of the walk starting at $0$ and ending at $y-x$. Next we note that $\sum_xp_0(N-i_n, x-x_n)=1$ because $p_0(n,x)$ is a transition probability.

Combining above and expanding similarly for the second walk we thus have shown Lemma \ref{2}.
\end{proof}

\begin{lm}\label{3}\begin{eqnarray*}E_Q(K^2(N))=\sum^N_{n=0}\sum_{1\leq i_1< \cdots <i_n\leq N}c^{2n}_{N,d}\sum_{x_1,\ldots, x_n} \prod^n_{k=1}p^2_0(i_k-i_{k-1}, x_k-x_{k-1})\end{eqnarray*} $$\times\left(\sum_x x^2p_0(N-i_n, x-x_n)\right)^2$$\end{lm}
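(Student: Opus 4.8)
The plan is to repeat the argument of Lemma \ref{2} essentially verbatim, tracking the extra endpoint factor $\omega(N)^2$ through each step. Expanding the product defining $K(N)$ gives $K(N)=\int \sum_{n=0}^N f_n(\omega)\,\omega(N)^2\,dP^N_0(\omega)$ with $f_n$ as in the previous proof, so writing $\tilde g_n=\int f_n(\omega)\,\omega(N)^2\,dP^N_0(\omega)$ we have $K(N)^2=\sum_{0\leq n,m\leq N}\tilde g_n\tilde g_m$. Since $\omega(N)^2$ is a function of the walk alone and is independent of the environment $h$, the vanishing of $E_Q\tilde g_n\tilde g_m$ for $n\neq m$ follows from exactly the same reasoning: whenever one time index among the $i_k$ or $i'_l$ is unmatched, the corresponding factor $h(\cdot,\cdot)$ has mean zero and is independent of everything else, so the $E_Q$-expectation is $0$ by Fubini, the boundedness $\omega(N)^2\leq N^2$ making Fubini legitimate.

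For $n=m$, again only the perfectly paired configuration $i_k=i'_k$ survives: a cross matching $i_\alpha=i'_\beta$ with $\alpha\neq\beta$ contradicts the strict orderings $i_1<\cdots<i_n$ and $i'_1<\cdots<i'_n$, and once the times are paired, $h^2(n,x)=1$ forces the spatial constraint $\omega(i_k)=\tilde\omega(i_k)$ for all $k$. Hence
\begin{eqnarray*}E_Q(K^2(N))=\sum_{n=0}^N\sum_{1\leq i_1<\cdots<i_n\leq N}c^{2n}_{N,d}\int\!\!\int \prod_{k=1}^n 1_{[\omega(i_k)=\tilde\omega(i_k)]}\,\omega(N)^2\,\tilde\omega(N)^2\,dP^N_0(\omega)\,dP^N_0(\tilde\omega).\end{eqnarray*}

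It remains to expand the double integral in terms of transition probabilities, exactly as in the last part of the proof of Lemma \ref{2}. Insert sums over the common positions $x_1,\ldots,x_n$ of the two walks at times $i_1,\ldots,i_n$ and over the endpoints $\omega(N)=x$, $\tilde\omega(N)=\tilde x$; by independence of increments and spatial homogeneity each walk contributes $\prod_{k=1}^n p_0(i_k-i_{k-1},x_k-x_{k-1})$ times a factor for the free segment $[i_n,N]$, with the convention $i_0=0$, $x_0=0$. The only difference from Lemma \ref{2} is that this final segment is now weighted by $\omega(N)^2$: instead of $\sum_x p_0(N-i_n,x-x_n)=1$ we obtain $\sum_x x^2 p_0(N-i_n,x-x_n)$, and this happens once for $\omega$ and once for $\tilde\omega$, producing the factor $\left(\sum_x x^2 p_0(N-i_n,x-x_n)\right)^2$. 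Collecting the terms yields the stated identity.

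There is no serious obstacle here; the computation is a direct variant of Lemma \ref{2}. The only points requiring a moment of care are the use of Fubini (justified by $\omega(N)^2\leq N^2$ and the finiteness of all sums involved) and the observation that $\omega(N)^2$ depends on the walk only through its terminal position, so it attaches cleanly to the last factor $p_0(N-i_n,\cdot-x_n)$ in the factorization and does not interfere with the pair-matching structure on $[0,i_n]$.
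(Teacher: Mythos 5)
Your proposal is correct and follows essentially the same route as the paper: the paper's proof likewise observes that the only difference from Lemma \ref{2} is the extra factor $\omega(N)^2\tilde\omega(N)^2$, repeats the pair-matching expansion, and attaches the weight to the final segment so that $\sum_x p_0(N-i_n,x-x_n)=1$ is replaced by $\sum_x x^2 p_0(N-i_n,x-x_n)$ for each walk. Your additional remarks on Fubini and on $\omega(N)^2$ depending only on the terminal position are harmless elaborations of what the paper leaves implicit.
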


\begin{proof} To estimate second moment of $K(N)$, we have $$E_Q(K^2(N))=E_Q\int\int \prod_{1\leq n\leq N}\left[1+c_{N,d}h(n,\omega(n))\right]\left[1+c_{N,d}h(n,\tilde{\omega}(n))\right]$$ $$\times\omega(N)^2\tilde{\omega}(N)^2dP^N_0(\omega)dP^N_0(\tilde{\omega})$$

As we see, the only difference between $E_Q(Z^2(N))$ and $E_Q(K^2(N))$ is the extra term $\omega(N)^2\tilde{\omega}(N)^2$, and we proceed as before to expand the second moment to get $$\sum^N_{n=0}\sum_{1\leq i_1< \cdots <i_n\leq N}c^{2n}_{N,d}\int\int 1_{\left[\omega(i_1)=\tilde{\omega}(i_1),\ldots, \omega(i_n)=\tilde{\omega}(i_n)\right]}\omega(N)^2\tilde{\omega}(N)^2dP^N_0(\omega)dP^N_0(\tilde{\omega})$$ $$=\sum^N_{n=0}\sum_{1\leq i_1< \cdots <i_n\leq N}c^{2n}_{N,d}\int \sum_{x_1,\ldots, x_n,x}1_{\left[\omega(i_1)=x_1,\ldots, \omega(i_n)=x_n\right]}$$
$$\times x^2P^N_0\left(\tilde{\omega}(i_1)=x_1,\ldots,\tilde{\omega}(i_n)=x_n, \tilde{\omega}(N)=x\right)\omega(N)^2dP^N_0(\omega)$$ $$=\sum^N_{n=0}\sum_{1\leq i_1< \cdots <i_n\leq N}c^{2n}_{N,d}\int \sum_{x_1,\ldots, x_n}1_{\left[\omega(i_1)=x_1,\ldots, \omega(i_n)=x_n\right]} $$ $$\times \prod^n_{k=1}p_0(i_k-i_{k-1}, x_k-x_{k-1})\sum_x x^2p_0(N-i_n, x-x_n)\omega(N)^2dP^N_0(\omega)$$ $$=\sum^N_{n=0}\sum_{1\leq i_1< \cdots <i_n\leq N}c^{2n}_{N,d}\sum_{x_1,\ldots, x_n} \prod^n_{k=1}p^2_0(i_k-i_{k-1}, x_k-x_{k-1})$$
$$\times \sum_x x^2p_0(N-i_n, x-x_n)\sum_y y^2p_0(N-i_n, y-x_n)$$
\end{proof}

\section{Diffusivity of Rescaled Random Polymer in $d=1$}
In this section, we are going to show Proposition \ref{d12M} and Theorem \ref{MTHM} for dimension $d=1$.

The key ingredient we need is that the transition probability $p_0(n,x)$ has the following estimate by the Gaussian density, more precisely, for $d\geq 1$, $x\in \mathbb{Z}^d$ such that $x_1+\cdots + x_d+n\equiv 0\mod 2$, then \begin{eqnarray}\label{GLE}p_0(n,x)=2\left(\frac{d}{2\pi n}\right)^{d/2}\exp\left(-\frac{d|x|^2}{2n}\right)+r_n(x)\end{eqnarray} where $|r_n(x)|\leq \min\left(c_dn^{-(d+2)/2}, c'_d|x|^{-2}n^{-d/2}\right)$ for some constants $c_d, c'_d$ that depend only on the dimension. (See Theorem 1.2.1 in \cite{Lawler})

\subsection*{Section 3.1}
In this subsection, we are going to show Proposition \ref{d12M} i) in a series of lemmas.

\begin{lm}\label{4} \begin{eqnarray*}\sum_{x_1,\ldots, x_n}\prod^n_{k=1}p^2_0(i_k-i_{k-1}, x_k-x_{k-1})\leq c_1^n\prod^n_{k=1}(i_k-i_{k-1})^{-1/2}\end{eqnarray*} for some constant $c_1$ that depends only on the dimension $d=1$ \end{lm}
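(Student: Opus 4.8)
The plan is to prove Lemma \ref{4} by reducing the $n$-fold sum to a product of single-step sums, each of which is controlled by the local central limit estimate \eqref{GLE}.

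\textbf{Step 1: Factorize the sum.} I would first change variables to the increments $y_k = x_k - x_{k-1}$ (with $x_0 = 0$), so that the sum over $x_1,\ldots,x_n$ becomes a sum over $y_1,\ldots,y_n$ and the product $\prod_{k=1}^n p_0^2(i_k - i_{k-1}, x_k - x_{k-1})$ becomes $\prod_{k=1}^n p_0^2(i_k - i_{k-1}, y_k)$. Since the summand now factors completely over $k$, the sum factors as $\prod_{k=1}^n \bigl( \sum_{y} p_0^2(\Delta_k, y) \bigr)$ where $\Delta_k = i_k - i_{k-1}$. (One must be a little careful that the $y_k$ range over all of $\mathbb{Z}^d$ with the correct parity constraint; but the parity constraint only restricts the sum, it does not break the factorization, since $p_0(\Delta_k, y)$ vanishes off the correct parity class anyway.)

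\textbf{Step 2: Bound the single-step sum $\sum_y p_0^2(m, y)$ by $c_1 m^{-1/2}$ in $d=1$.} This is the crux of the matter. Using \eqref{GLE} with $d=1$, write $p_0(m,y) = g_m(y) + r_m(y)$ where $g_m(y) = 2(2\pi m)^{-1/2}\exp(-y^2/2m)$ is the Gaussian part and $|r_m(y)| \le \min(c_1 m^{-3/2}, c_1' y^{-2} m^{-1/2})$. Then $p_0^2 \le 2g_m^2 + 2r_m^2$, so it suffices to bound $\sum_y g_m(y)^2$ and $\sum_y r_m(y)^2$ separately. For the Gaussian part, $\sum_y g_m(y)^2 \le \frac{2}{\pi m}\sum_y e^{-y^2/m} \le \frac{2}{\pi m}\cdot C\sqrt{m} = O(m^{-1/2})$, comparing the sum to the integral $\int e^{-t^2/m}\,dt = \sqrt{\pi m}$. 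For the error part, split the sum at $|y| = \sqrt{m}$: for $|y| \le \sqrt m$ use $|r_m(y)| \le c_1 m^{-3/2}$, giving a contribution $\le (2\sqrt m + 1)\cdot c_1^2 m^{-3} = O(m^{-5/2})$; for $|y| > \sqrt m$ use $|r_m(y)| \le c_1' y^{-2} m^{-1/2}$, giving $\le c_1'^2 m^{-1}\sum_{|y|>\sqrt m} y^{-4} = O(m^{-1}\cdot m^{-3/2}) = O(m^{-5/2})$. Both error contributions are lower order than $m^{-1/2}$, so altogether $\sum_y p_0^2(m,y) \le c_1 m^{-1/2}$ for a suitable dimensional constant $c_1$, valid for all $m \ge 1$ (the finitely many small $m$ can be absorbed into the constant, and one should also note $p_0^2(m,y)\le p_0(m,y)$ trivially gives $\sum_y p_0^2 \le 1$, handling $m=0$ or degenerate cases if needed).

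\textbf{Step 3: Assemble.} Multiplying the bound from Step 2 over $k = 1, \ldots, n$ and feeding it into the factorization from Step 1 yields $\sum_{x_1,\ldots,x_n}\prod_{k=1}^n p_0^2(i_k - i_{k-1}, x_k - x_{k-1}) \le \prod_{k=1}^n c_1 (i_k - i_{k-1})^{-1/2} = c_1^n \prod_{k=1}^n (i_k - i_{k-1})^{-1/2}$, which is exactly the claimed inequality.

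The main obstacle is Step 2 — specifically, getting a clean bound on $\sum_y p_0^2(m,y)$ that is uniform in $m$ with the sharp exponent $-1/2$, and in particular handling the error term $r_m(y)$ carefully enough that its contribution is genuinely lower order (which requires using \emph{both} halves of the min-bound on $|r_m(y)|$, small $|y|$ versus large $|y|$). The factorization in Step 1 and the assembly in Step 3 are routine once Step 2 is in hand. It is also worth being slightly cautious about the edge cases $i_k = i_{k-1}$ (which cannot occur here since the $i_k$ are strictly increasing, so $i_k - i_{k-1} \ge 1$) and about the parity constraint, but neither causes real difficulty.
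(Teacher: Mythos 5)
Your proof is correct, and Steps 1 and 3 coincide with what the paper does; the difference is entirely in how the single-step bound $\sum_y p_0^2(m,y)\leq c_1 m^{-1/2}$ is obtained. You treat it as the crux and establish it by squaring the local CLT decomposition $p_0=g_m+r_m$, estimating $\sum_y g_m(y)^2$ by integral comparison and controlling $\sum_y r_m(y)^2$ by splitting at $|y|=\sqrt{m}$ and using both halves of the min-bound on $|r_m|$. The paper dispenses with all of this in one line: it uses \eqref{GLE} only to get the \emph{uniform} bound $p_0(m,y)\leq c_1 m^{-1/2}$ (since $e^{-x^2}\leq 1$), writes $p_0^2(m,y)\leq c_1 m^{-1/2}\,p_0(m,y)$, and then sums the remaining single power of $p_0$ to $1$ because it is a transition probability. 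That $L^\infty$--$L^1$ interpolation avoids any analysis of the error term $r_m$ and is the same device reused in Lemmas \ref{10}, \ref{17}, and \ref{22}. What your more laborious route buys is essentially the sharp asymptotics of $\sum_y p_0^2(m,y)$, which the paper only needs later (in Lemma \ref{25}) and obtains there by a cleaner identity, $\sum_y p_0^2(m,y)=p_0(2m,0)$, via reflection and Chapman--Kolmogorov. For the purposes of Lemma \ref{4} your argument is valid but does more work than necessary.
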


\begin{proof} For $d=1$, since $e^{-x^2}\leq 1$ for all $x$, we see (\ref{GLE}) is at most $c_1n^{-1/2}$ for some constant $c_1$. Using this uniform estimate for each of the $p_0(i_k-i_{k-1},x_k-x_{k-1})$'s, we have $$\sum_{x_1,\ldots, x_n}\prod^n_{k=1}p^2_0(i_k-i_{k-1}, x_k-x_{k-1})=\sum_{x_1}p^2_0(i_1,x_1)\cdots \sum_{x_n}p^2_0(i_n-i_{n-1},x_n-x_{n-1})$$ $$\leq c_1^ni_1^{-1/2}\cdots (i_n-i_{n-1})^{-1/2}\sum_{x_1}p_0(i_1,x_1)\cdots \sum_{x_n}p_0(i_n-i_{n-1},x_n-x_{n-1})=c_1^n\prod^n_{k=1}(i_k-i_{k-1})^{-1/2}$$ for some constant $c_1$ that depends only on the dimension $d=1$ (last equality follows from that $p_0(n,x)$ is a transition probability). \end{proof}

\begin{lm}\label{5} \begin{eqnarray*}\sum_{1\leq i_1< \cdots <i_n\leq N}c_1^nc^{2n}_{N,1}\prod^n_{k=1}(i_k-i_{k-1})^{-1/2}\leq \left(c_1c^2_{N,1} N^{1/2}\right)^n\end{eqnarray*}
\end{lm}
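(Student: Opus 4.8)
The plan is to bound the multiple time-sum by a product of one-dimensional sums, using the fact that the summand factorizes over the increments $j_k := i_k - i_{k-1}$. First I would change variables from the ordered tuple $(i_1 < \cdots < i_n)$ with $1 \le i_1$ and $i_n \le N$ to the gap variables $j_1 = i_1 - i_0,\ j_2 = i_2 - i_1,\ \ldots,\ j_n = i_n - i_{n-1}$ (recall $i_0 = 0$ by the convention used in Lemma \ref{4}). Each $j_k$ is a positive integer, and the constraint $i_n \le N$ becomes $j_1 + \cdots + j_n \le N$. Under this substitution the summand $\prod_{k=1}^n (i_k - i_{k-1})^{-1/2}$ becomes $\prod_{k=1}^n j_k^{-1/2}$, so
\begin{eqnarray*}
\sum_{1\leq i_1< \cdots <i_n\leq N}\prod^n_{k=1}(i_k-i_{k-1})^{-1/2}
= \sum_{\substack{j_1,\ldots,j_n\geq 1\\ j_1+\cdots+j_n\leq N}}\prod^n_{k=1}j_k^{-1/2}.
\end{eqnarray*}

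Next I would drop the coupling constraint $j_1 + \cdots + j_n \le N$ — but not by relaxing it to $j_k \le N$ for each $k$ independently, since $\sum_{j=1}^N j^{-1/2}$ is of order $2N^{1/2}$ and that would only give the bound with an extra constant $2^n$ absorbed into $c_1$; rather, each individual $j_k$ already satisfies $1 \le j_k \le N$, so bounding the constrained sum above by the product of unconstrained (over $1 \le j_k \le N$) sums yields
\begin{eqnarray*}
\sum_{\substack{j_1,\ldots,j_n\geq 1\\ j_1+\cdots+j_n\leq N}}\prod^n_{k=1}j_k^{-1/2}
\leq \prod^n_{k=1}\left(\sum^N_{j=1}j^{-1/2}\right)
= \left(\sum^N_{j=1}j^{-1/2}\right)^{\!n}.
\end{eqnarray*}
Then I would estimate $\sum_{j=1}^N j^{-1/2} \le 1 + \int_1^N t^{-1/2}\,dt = 1 + 2(N^{1/2}-1) \le 2N^{1/2}$, and more sharply simply use $\sum_{j=1}^N j^{-1/2} \le N^{1/2} \cdot$ (nothing works directly; $\sum_{j=1}^N j^{-1/2} > N^{1/2}$). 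So the honest bound is $\sum_{j=1}^N j^{-1/2} \le 2N^{1/2}$, and the factor $2^n$ should be absorbed by passing to a (possibly larger) dimensional constant; since the statement carries a free constant $c_1$ depending only on the dimension, I would simply redefine $c_1$ at this step to absorb it, so that $c_1^n (2N^{1/2})^n \cdot c_{N,1}^{2n}$ is rewritten as $(c_1 c_{N,1}^2 N^{1/2})^n$ with the new $c_1$. Multiplying through by $c_1^n c_{N,1}^{2n}$ then gives exactly the claimed bound $(c_1 c_{N,1}^2 N^{1/2})^n$.

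The only subtlety — and the place where care is genuinely needed — is the bookkeeping of constants: the constant $c_1$ appearing in the statement of Lemma \ref{5} is the \emph{same symbol} as in Lemma \ref{4}, but after absorbing the harmonic-sum factor it is a different numerical value (larger by a bounded factor). Since the paper only ever tracks these constants up to "depends only on the dimension," this is harmless, but I would state explicitly that $c_1$ is being enlarged. Everything else is a routine change of variables plus the elementary integral estimate for $\sum j^{-1/2}$; there is no real obstacle.
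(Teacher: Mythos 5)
Your proposal is correct and is essentially the same argument as the paper's: the paper peels off the innermost sum $\sum_{i_n=i_{n-1}+1}^{N}(i_n-i_{n-1})^{-1/2}\leq 2N^{1/2}$ and iterates, which is exactly your gap-variable decoupling $\prod_k\bigl(\sum_{j=1}^N j^{-1/2}\bigr)\leq (2N^{1/2})^n$ with the factor $2^n$ absorbed into the dimension-dependent constant (the paper likewise lets $c_1$ ``change from line to line''). The only blemish is the self-contradictory sentence about ``not relaxing to $j_k\leq N$ independently'' followed by doing precisely that, but the mathematics you actually carry out is fine.
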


\begin{proof}
$$\sum_{1\leq i_1< \cdots <i_n\leq N}c_1^nc^{2n}_{N,1}\prod^n_{k=1}(i_k-i_{k-1})^{-1/2} $$ $$=c_1^nc^{2n}_{N,1}\sum^{N-(n-1)}_{i_1=1}\cdots \sum^{N-1}_{i_{n-1}=i_{n-2}+1}i_1^{-1/2}\cdots (i_{n-1}-i_{n-2})^{-1/2} \sum^N_{i_n=i_{n-1}+1}(i_n-i_{n-1})^{-1/2}$$ $$\leq c_1^nc_{N,1}^{2n}\sum^{N-(n-1)}_{i_1=1}\cdots \sum^{N-1}_{i_{n-1}=i_{n-2}+1}i_1^{-1/2}\cdots (i_{n-1}-i_{n-2})^{-1/2} 2N^{1/2}$$ Last inequality holds because $\sum^{N}_{k=1}k^{-1/2}\leq 1+\int^N_1x^{-1/2}dx=1+2\left(N^{1/2}-1\right)\leq 2N^{1/2}$. Continuing from above and arguing similarly to estimate each sum in the expression we have
$$\leq c_1^nc^{2n}_{N,1}N^{1/2}\sum^{N-(n-1)}_{i_1=1}\cdots \sum^{N-1}_{i_{n-1}=i_{n-2}+1}i_1^{-1/2}\cdots (i_{n-1}-i_{n-2})^{-1/2}\leq \left(c_1c^{2}_{N,1}N^{1/2}\right)^n$$ where the constant $c_1$ will change from line to line (again it depends only on the dimension $d=1$).
\end{proof}

We conclude by Lemma \ref{2} that Proposition \ref{d12M} i) holds.

\subsection*{Section 3.2}
In this subsection, we are going to show Proposition \ref{d12M} ii) in a series of lemmas.

By standard computations of the moments of simple random walk of length $n$ in dimension $d=1$ using characteristic function, we have $$\sum_xx^2p_0(n,x)=n; \ \ \ \sum_xx^4p_0(n,x)=3n^2-2n$$

\begin{lm} \label{6}\begin{eqnarray*}\sum_x x^2p_0(N-i_n,x-x_n)=(N-i_n)+x^2_n\end{eqnarray*}\end{lm}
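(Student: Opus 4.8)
The plan is to express the random walk started at $x_n$ at time $i_n$ in terms of a walk started at the origin, and then to use the moment identity for the simple random walk stated just above. Concretely, by spatial homogeneity of the simple random walk, $p_0(N-i_n, x - x_n)$ is the probability that a walk started at the origin and run for $N-i_n$ steps lands at $x - x_n$; equivalently, writing $x = x_n + y$ and summing over $y$ rather than $x$,
\begin{eqnarray*}
\sum_x x^2 p_0(N-i_n, x - x_n) = \sum_y (x_n + y)^2 p_0(N - i_n, y).
\end{eqnarray*}

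Next I would expand the square $(x_n + y)^2 = x_n^2 + 2 x_n y + y^2$ and distribute the sum into three pieces. The first piece is $x_n^2 \sum_y p_0(N - i_n, y) = x_n^2$ since $p_0$ is a transition probability. The second piece is $2 x_n \sum_y y\, p_0(N - i_n, y) = 0$ by the symmetry $p_0(n, y) = p_0(n, -y)$ of the simple random walk (odd first moment vanishes). The third piece is $\sum_y y^2 p_0(N - i_n, y) = N - i_n$ by the stated moment computation $\sum_x x^2 p_0(n,x) = n$. Adding these gives $\sum_x x^2 p_0(N - i_n, x - x_n) = (N - i_n) + x_n^2$, as claimed.

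There is really no serious obstacle here: the only points requiring any care are the harmless reindexing $x \mapsto y = x - x_n$ (legitimate since the sum is over all of $\mathbb{Z}$) and invoking the two facts already established, namely that $p_0(n,\cdot)$ sums to $1$ and has second moment $n$, together with the parity/symmetry that kills the cross term. If one wanted to be maximally self-contained about the vanishing cross term, one could note that $y \mapsto -y$ is a measure-preserving bijection of $\mathbb{Z}$ under which $p_0(n,y)$ is invariant while $y$ changes sign, forcing $\sum_y y\, p_0(n,y) = 0$. This identity will then be substituted into Lemma \ref{3} in the next step (and, after squaring, produces the factor $\bigl((N-i_n) + x_n^2\bigr)^2$ appearing there), which is presumably the reason it is isolated as a lemma.
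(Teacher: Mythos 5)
Your proposal is correct and follows essentially the same argument as the paper: reindex the sum by $y = x - x_n$, expand $(x_n+y)^2$, and use that $p_0$ sums to $1$, has vanishing odd moment, and has second moment $N-i_n$. The only difference is that you spell out the symmetry argument for the cross term slightly more explicitly than the paper does.
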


\begin{proof} $$\sum_x x^2p_0(N-i_n,x-x_n)
=\sum_{x-x_n}x^2p_0(N-i_n,x-x_n)=\sum_x(x+x_n)^2p_0(N-i_n,x)$$ $$=\sum_x
x^2p_0(N-i_n,x)+\sum_xx^2_np_0(N-i_n,x)=(N-i_n)+x^2_n$$ where first equality holds because summation over all $x$'s is same as summation over all $x-x_n$'s and third equality because holds any odd moment of simple random walk vanish.\end{proof}

\begin{lm} \label{7}\begin{eqnarray*}\sum_{x_k}x^4_kp_0(i_k-i_{k-1},x_k-x_{k-1})\end{eqnarray*} $$=3(i_k-i_{k-1})^2-2(i_k-i_{k-1})+6x^2_{k-1}(i_k-i_{k-1})+x^4_{k-1}$$ \end{lm}

\begin{proof} $$\sum_{x_k}x^4_kp_0(i_k-i_{k-1},x_k-x_{k-1})=\sum_{x_k-x_{k-1}}x^4_kp_0(i_k-i_{k-1},x_k-x_{k-1})
=\sum_{x_k}(x_k+x_{k-1})^4p_0(i_k-i_{k-1},x_k) $$ $$=\sum_{x_k}x^4_kp_0(i_k-i_{k-1},x_k)+\sum_{x_k}6x^2_kx^2_{k-1}p_0(i_k-i_{k-1},x_k)
+\sum_{x_k}x^4_{k-1}p_0(i_k-i_{k-1},x_k) $$ $$=3(i_k-i_{k-1})^2-2(i_k-i_{k-1})+6x^2_{k-1}(i_k-i_{k-1})+x^4_{k-1}$$ \end{proof}

\begin{lm} \label{8}\begin{eqnarray*}\sum_{x_1,\ldots, x_n}\prod^n_{k=1}p_0(i_k-i_{k-1}, x_k-x_{k-1})\left((N-i_n)^2+2(N-i_n)x^2_n+x^4_n\right) \end{eqnarray*}  $$=(N-i_n)^2+2(N-i_n)i_n+3\sum^n_{k=1}(i_k-i_{k-1})^2-2i_n+6\sum^n_{k=1}(i_k-i_{k-1})i_{k-1}$$ \end{lm}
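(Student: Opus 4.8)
The plan is to evaluate the left-hand side term by term, using the moment identities recorded in Lemmas \ref{6} and \ref{7}, and proceeding by induction (or, equivalently, by telescoping) on the number of time points $n$. The key observation is that the quantity in parentheses is exactly $\left(\sum_x x^2 p_0(N-i_n, x-x_n)\right)^2$ expanded via Lemma \ref{6}, namely $(N-i_n)^2 + 2(N-i_n)x_n^2 + x_n^4$. So the whole left-hand side equals
\begin{eqnarray*}
(N-i_n)^2 \sum_{x_1,\ldots,x_n}\prod_{k=1}^n p_0(i_k-i_{k-1},x_k-x_{k-1}) &+& 2(N-i_n)\sum_{x_1,\ldots,x_n}x_n^2\prod_{k=1}^n p_0(\cdots) \\
&+& \sum_{x_1,\ldots,x_n}x_n^4\prod_{k=1}^n p_0(\cdots).
\end{eqnarray*}
The first sum is $1$ since each factor sums to $1$ over its free variable (working from $x_1$ outward, as in Lemma \ref{4}). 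The second sum requires computing $E[x_n^2]$ under the law of the $n$-step walk, which is $\sum_{k=1}^n (i_k - i_{k-1}) = i_n$ by independence of increments and Lemma \ref{6} applied inductively. The third sum requires $E[x_n^4]$, and this is the piece that carries the real content.

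For $E[x_n^4]$ I would set $m_n := \sum_{x_1,\ldots,x_n} x_n^4 \prod_{k=1}^n p_0(i_k-i_{k-1}, x_k-x_{k-1})$ and $s_n := i_n$ (the second moment), then use Lemma \ref{7}: summing last over $x_n$ with $x_{n-1}$ fixed gives
$$
m_n = 3(i_n-i_{n-1})^2 - 2(i_n - i_{n-1}) + 6(i_n - i_{n-1})\, s_{n-1} + m_{n-1},
$$
after integrating out $x_{n-1},\ldots,x_1$ against the remaining factors (which replaces $x_{n-1}^2$ by $s_{n-1}$ and leaves $m_{n-1}$). Unrolling this recursion from $m_0 = 0$ yields
$$
m_n = 3\sum_{k=1}^n (i_k - i_{k-1})^2 - 2 i_n + 6\sum_{k=1}^n (i_k - i_{k-1})\, i_{k-1},
$$
where I have used $s_{k-1} = i_{k-1}$ and telescoped $\sum_{k=1}^n (i_k - i_{k-1}) = i_n$. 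Substituting the three pieces back — $(N-i_n)^2 \cdot 1$, $2(N-i_n) \cdot i_n$, and $m_n$ — gives exactly the claimed right-hand side.

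The main obstacle, such as it is, is purely bookkeeping: one must be careful that when integrating out $x_{n-1}$ the cross term $6 x_{n-1}^2 (i_n - i_{n-1})$ from Lemma \ref{7} produces $6(i_n - i_{n-1}) s_{n-1}$ with $s_{n-1}$ the \emph{unconditional} second moment of the $(n-1)$-step walk (again $i_{n-1}$), and that the lower-order terms $x_{n-1}^4$ feed correctly into $m_{n-1}$. There is no analytic difficulty — no Gaussian estimates or error terms enter, since these are exact moment computations for the simple random walk — so the proof is a short induction once the recursion is set up. I would write it as a direct computation displaying the three sums and then citing Lemmas \ref{6} and \ref{7} for the values $s_n = i_n$ and $m_n$ as above.
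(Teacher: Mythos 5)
Your proposal is correct and follows essentially the same route as the paper: an exact moment computation by induction on $n$, summing out $x_n$ last via Lemmas \ref{6} and \ref{7} and telescoping the resulting recursion. The only difference is organizational — you isolate the fourth-moment recursion $m_n = m_{n-1} + 3(i_n-i_{n-1})^2 - 2(i_n-i_{n-1}) + 6(i_n-i_{n-1})i_{n-1}$ and assemble the three pieces at the end, whereas the paper inducts on the full expression at once — but the underlying computation is identical.
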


\begin{proof}
We do this by induction on $n$. For $n=1$, we have $$\sum_{x_1}p_0(i_1,x_1)\left((N-i_1)^2+2(N-i_1)x^2_1+x^4_1\right) $$ $$=(N-i_1)^2\sum_{x_1}p_0(i_1,x_1)+2(N-i_1)\sum_{x_1}x^2_1p_0(i_1,x_1)
+\sum_{x_1}x^4_1p_0(i_1,x_1)$$ $$=(N-i_1)^2+2(N-i_1)i_1+3i_1^2-2i_1$$ (Note we do not have term of the form $6\sum^{n}_{k=1}(i_k-i_{k-1})i_{k-1}$ because $i_0=0$.)

Suppose equality holds for $n-1$. Then $$\sum_{x_1,\ldots, x_n}\prod^n_{k=1}p_0(i_k-i_{k-1}, x_k-x_{k-1})\left((N-i_n)^2+2(N-i_n)x^2_n+x^4_n\right)
$$ $$=\sum_{x_1\ldots, x_{n-1}}\prod^{n-1}_{k=1}p_0(i_k-i_{k-1}, x_k-x_{k-1})\sum_{x_n}p_0(i_n-i_{n-1}, x_n-x_{n-1})\left((N-i_n)^2+2(N-i_n)x^2_n+x^4_n\right)$$ $$=\sum_{x_1\ldots, x_{n-1}}\prod^{n-1}_{k=1}p_0(i_k-i_{k-1}, x_k-x_{k-1})
 [(N-i_n)^2+2(N-i_n)(i_n-i_{n-1}+x^2_{n-1})$$ $$+3(i_n-i_{n-1})^2-2(i_n-i_{n-1})+6x^2_{n-1}(i_n-i_{n-1})+x^4_{n-1}]$$
$$ =\sum_{x_1\ldots, x_{n-1}}\prod^{n-1}_{k=1}p_0(i_k-i_{k-1}, x_k-x_{k-1})([(N-i_n)^2+2(N-i_n)(i_n-i_{n-1})+3(i_n-i_{n-1})^2-2(i_n-i_{n-1})]$$
$$+[2(N-i_n)+6(i_n-i_{n-1})]x^2_{n-1}+x^4_{n-1})$$ $$=[(N-i_n)^2+2(N-i_n)(i_n-i_{n-1})+3(i_n-i_{n-1})^2-2(i_n-i_{n-1})]+
[2(N-i_n)+6(i_n-i_{n-1})]i_{n-1} $$ $$+3\sum^{n-1}_{k=1}(i_k-i_{k-1})^2-2i_{n-1}+6\sum^{n-1}_{k=1}(i_k-i_{k-1})i_{k-1}$$ $$=(N-i_n)^2+2(N-i_n)(i_n-i_{n-1}+i_{n-1})+3(i_n-i_{n-1})^2+3\sum^{n-1}_{k=1}(i_k-i_{k-1})^2$$ $$-2(i_n-i_{n-1}+i_{n-1})+6(i_n-i_{n-1})i_{n-1}+6\sum^{n-1}_{k=1}
(i_k-i_{k-1})i_{k-1}$$ $$=(N-i_n)^2+2(N-i_n)i_n+3\sum^n_{k=1}(i_k-i_{k-1})^2-2i_n+6\sum^n_{k=1}(i_k-i_{k-1})i_{k-1}$$ where in the second equality we use Lemmas \ref{6} and \ref{7} and in the fourth equality we use the inductive hypothesis.
\end{proof}

\begin{lm} \label{9}\begin{eqnarray*}(N-i_n)^2+2(N-i_n)i_n+3\sum^n_{k=1}(i_k-i_{k-1})^2 -2i_n+6\sum^n_{k=1}(i_k-i_{k-1})i_{k-1}\leq 100^nN^2\end{eqnarray*}\end{lm}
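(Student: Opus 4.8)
The plan is to bound each of the five summands on the left-hand side separately by a constant multiple of $N^2$, using only the elementary constraints imposed by $1\le i_1<\cdots<i_n\le N$ together with the convention $i_0=0$. From these constraints one extracts three facts that are all that is needed: $0\le i_k\le N$ for every $k$; each gap satisfies $0\le i_k-i_{k-1}\le N$; and the gaps telescope, $\sum_{k=1}^n(i_k-i_{k-1})=i_n\le N$.

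First I would dispose of the two terms that are outright products of quantities bounded by $N$: $(N-i_n)^2\le N^2$ and $2(N-i_n)i_n\le 2N^2$. Next, for the two sums I would factor out one copy of $N$ from each gap-type term and then collapse what remains by telescoping: since $i_k-i_{k-1}\le N$, we get $3\sum_{k=1}^n(i_k-i_{k-1})^2\le 3N\sum_{k=1}^n(i_k-i_{k-1})=3Ni_n\le 3N^2$, and since also $i_{k-1}\le N$, we get $6\sum_{k=1}^n(i_k-i_{k-1})i_{k-1}\le 6N\sum_{k=1}^n(i_k-i_{k-1})=6Ni_n\le 6N^2$. Finally the term $-2i_n$ is nonpositive and may simply be discarded. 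Adding the contributions gives a bound of $(1+2+3+6)N^2=12N^2$, and since $12\le 100\le 100^n$ for every $n\ge 1$ (and for $n=0$ the left-hand side is just $(N-i_0)^2=N^2\le 100^0N^2$), the claimed inequality follows.

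There is no genuine obstacle here: the estimate is deliberately crude, the factor $100^n$ being far larger than necessary, precisely so that it can later be absorbed harmlessly into the geometric series that controls $E_Q(K^2(N))$ in Proposition \ref{d12M} ii). The only points that warrant a moment's attention are keeping the favorable sign of the $-2i_n$ term in mind so that it is dropped rather than estimated, and noting the degenerate case $n=0$, where both sums are empty.
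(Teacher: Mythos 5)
Your proof is correct, and it follows the same elementary strategy as the paper (bound every piece of the left-hand side by a multiple of $N^2$ using only $0\le i_{k-1}\le i_k\le N$), but it handles the two sums over $k$ differently and more efficiently. The paper expands $(i_k-i_{k-1})^2$ and $(i_k-i_{k-1})i_{k-1}$ and bounds each of the $n$ summands separately by multiples of $N^2$, arriving at the intermediate bound $10N^2+24nN^2$, which is then absorbed into $100^nN^2$. You instead pull out one factor of $N$ from each summand and telescope $\sum_{k=1}^n(i_k-i_{k-1})=i_n\le N$, which kills the dependence on $n$ entirely and yields the uniform bound $12N^2$. This is sharper (it shows the factor $100^n$ is pure overkill and that a single constant would do), and it also cleanly covers the degenerate case $n=0$, where the paper's intermediate expression $10N^2$ would not literally sit below $100^0N^2$ even though the actual left-hand side there is just $N^2$. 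Either version suffices for the application in Lemma \ref{10}, since the whole point is only that the bound be dominated by $C^nN^2$ so it can be fed into the geometric series of Proposition \ref{d12M} ii).
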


\begin{proof} $$(N-i_n)^2+2(N-i_n)i_n+3\sum^n_{k=1}(i_k-i_{k-1})^2-2i_n+6\sum^n_{k=1}(i_k-i_{k-1})i_{k-1}$$
 $$=N^2-2Ni_n+i^2_n+2Ni_n-2i^2_n+3\sum^n_{k=1}(i^2_k-2i_ki_{k-1}+i^2_{k-1})-2i_n+6\sum^n_{k=1}(i_ki_{k-1}-i^2_{k-1})$$ $$\leq N^2+2N^2+N^2+2N^2+2N^2+3n(N^2+2N^2+N^2)+2N^2+6n(N^2+N^2) $$ $$=10N^2+24nN^2\leq 100^nN^2$$ \end{proof}

\begin{lm} \label{10}\begin{eqnarray*}\sum_{x_1,\ldots, x_n} \prod^n_{k=1}p^2_0(i_k-i_{k-1}, x_k-x_{k-1})\left(\sum_x x^2p_0(N-i_n, x-x_n)\right)^2\leq c^n_1N^2\prod^n_{k=1}(i_k-i_{k-1})^{-1/2}\end{eqnarray*}\end{lm}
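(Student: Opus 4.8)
The plan is to combine the two estimates already in hand: Lemma~\ref{4}, which controls the sum over the intermediate sites $x_1,\ldots,x_n$ of $\prod_{k=1}^n p_0^2(i_k-i_{k-1},x_k-x_{k-1})$ by $c_1^n\prod_{k=1}^n(i_k-i_{k-1})^{-1/2}$, and Lemma~\ref{8}--\ref{9}, which evaluate and then bound the expression $\sum_{x_1,\ldots,x_n}\prod_{k=1}^n p_0(i_k-i_{k-1},x_k-x_{k-1})\big((N-i_n)^2+2(N-i_n)x_n^2+x_n^4\big)$ by $100^n N^2$. The obstacle is that these two lemmas are about different weightings: Lemma~\ref{4} squares all the $n$ transition kernels, while Lemmas~\ref{8}--\ref{9} keep only one copy of each kernel but insert the quartic-in-$x_n$ polynomial coming from $\big(\sum_x x^2 p_0(N-i_n,x-x_n)\big)^2 = \big((N-i_n)+x_n^2\big)^2$ (using Lemma~\ref{6}). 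So the quantity I actually need to bound has $\prod p_0^2$ \emph{and} the polynomial weight simultaneously, and I must peel the weight off without losing the extra copy of each kernel.

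First I would use Lemma~\ref{6} to rewrite $\big(\sum_x x^2 p_0(N-i_n,x-x_n)\big)^2 = (N-i_n)^2 + 2(N-i_n)x_n^2 + x_n^4$, so the left-hand side of Lemma~\ref{10} becomes
\[
\sum_{x_1,\ldots,x_n}\prod_{k=1}^n p_0^2(i_k-i_{k-1},x_k-x_{k-1})\Big((N-i_n)^2 + 2(N-i_n)x_n^2 + x_n^4\Big).
\]
Then I would bound the polynomial factor pointwise. Since every intermediate site satisfies $|x_k|\le i_k\le N$, we have $x_n^2\le N^2$ and $x_n^4\le N^4$, hence $(N-i_n)^2 + 2(N-i_n)x_n^2 + x_n^4 \le N^2 + 2N\cdot N^2 + N^4 \le 4N^4$; more carefully one can keep the cleaner bound $\le (N + x_n^2)^2$ and note each term is at most a bounded multiple of a power of $N$ times a bounded power of $N$, which in any case is $\le C N^4$ for an absolute constant. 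Wait — that gives $N^4$, not $N^2$, so a naive pointwise bound is too lossy. Instead I would bound the polynomial only partially: write $(N-i_n)^2 + 2(N-i_n)x_n^2 + x_n^4 \le N^2\big(1 + x_n^2/N + x_n^4/N^2\big)$ and, crucially, absorb the one extra copy of $p_0(i_n-i_{n-1},x_n-x_{n-1})$ against this. That is, I replace one factor $p_0^2(i_n-i_{n-1},\cdot)$ by $p_0(i_n-i_{n-1},\cdot)\cdot c_1(i_n-i_{n-1})^{-1/2}$ via (\ref{GLE}) only for the kernels $k=1,\ldots,n-1$, keeping \emph{both} copies of the last kernel and one copy each of the first $n-1$; then the last sum $\sum_{x_n} p_0^2(i_n-i_{n-1},x_n-x_{n-1})\cdot(\text{polynomial})$ is handled by the exact moment computations. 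Re-examining, the cleanest route: bound $p_0^2(i_k-i_{k-1},\cdot)\le c_1(i_k-i_{k-1})^{-1/2}\,p_0(i_k-i_{k-1},\cdot)$ for \emph{all} $k$ via (\ref{GLE}), pulling out $c_1^n\prod(i_k-i_{k-1})^{-1/2}$ and leaving $\sum_{x_1,\ldots,x_n}\prod p_0(i_k-i_{k-1},\cdot)\big((N-i_n)^2+2(N-i_n)x_n^2+x_n^4\big)$, which is \emph{exactly} the quantity computed in Lemma~\ref{8} and bounded in Lemma~\ref{9} by $100^n N^2$.

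So the proof is three lines: apply (\ref{GLE}) to write each $p_0^2 \le c_1 (i_k-i_{k-1})^{-1/2} p_0$, factor out $c_1^n \prod_{k=1}^n (i_k-i_{k-1})^{-1/2}$, invoke Lemmas~\ref{6}, \ref{8}, \ref{9} to bound the remaining sum by $100^n N^2$, and absorb $100^n$ into a renamed dimensional constant $c_1^n$. The only point requiring care — and the ``main obstacle'' such as it is — is making sure that pulling the $(i_k-i_{k-1})^{-1/2}$ factors out of \emph{all} $n$ kernels (not $n-1$ of them) still leaves a sum of pure transition probabilities weighted by a polynomial in the $x_k$'s, so that Lemma~\ref{8}'s exact identity applies verbatim; this works because (\ref{GLE}) gives a uniform bound $p_0(m,y)\le c_1 m^{-1/2}$ valid for every $m\ge 1$, independent of $y$. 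Explicitly:
\[
\begin{aligned}
&\sum_{x_1,\ldots,x_n}\prod_{k=1}^n p_0^2(i_k-i_{k-1},x_k-x_{k-1})\Big(\sum_x x^2 p_0(N-i_n,x-x_n)\Big)^2\\
&\le c_1^n\prod_{k=1}^n (i_k-i_{k-1})^{-1/2}\sum_{x_1,\ldots,x_n}\prod_{k=1}^n p_0(i_k-i_{k-1},x_k-x_{k-1})\Big((N-i_n)^2+2(N-i_n)x_n^2+x_n^4\Big)\\
&\le c_1^n\prod_{k=1}^n (i_k-i_{k-1})^{-1/2}\cdot 100^n N^2\\
&\le c_1^n N^2\prod_{k=1}^n (i_k-i_{k-1})^{-1/2},
\end{aligned}
\]
where in the last step $c_1$ is renamed to absorb the factor $100$. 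I expect no genuine difficulty here; it is bookkeeping plus the already-established Lemmas~\ref{4}–\ref{9}.
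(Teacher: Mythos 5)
Your proof is correct and follows essentially the same route as the paper: bound each $p_0^2(i_k-i_{k-1},\cdot)\le c_1(i_k-i_{k-1})^{-1/2}p_0(i_k-i_{k-1},\cdot)$ via the uniform Gaussian estimate, pull out $c_1^n\prod_k(i_k-i_{k-1})^{-1/2}$, then apply Lemmas \ref{6}, \ref{8}, and \ref{9} to bound the remaining polynomial-weighted sum by $100^nN^2$ and absorb the constant. The exploratory detour about the lossy $N^4$ pointwise bound is harmless since you correctly discard it in favor of the exact moment computation.
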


\begin{proof} Using the uniform estimate (\ref{GLE}) for $d=1$ on the transition probability as in the proof of Lemma \ref{4}, we get $$\sum_{x_1,\ldots, x_n} \prod^n_{k=1}p^2_0(i_k-i_{k-1}, x_k-x_{k-1})\left(\sum_x x^2p_0(N-i_n, x-x_n)\right)^2 $$ $$\leq c_1^ni_1^{-1/2}(i_2-i_1)^{-1/2}\cdots (i_n-i_{n-1})^{-1/2}\sum_{x_1,\ldots, x_n} \prod^n_{k=1}p_0(i_k-i_{k-1}, x_k-x_{k-1})\left((N-i_n)+x^2_{n}\right)^2$$ $$= c_1^ni_1^{-1/2}(i_2-i_1)^{-1/2}\cdots (i_n-i_{n-1})^{-1/2}\sum_{x_1,\ldots, x_n} \prod^n_{k=1}p_0(i_k-i_{k-1}, x_k-x_{k-1})$$ $$\times\left((N-i_n)^2+2(N-i_n)x^2_n+x^4_n\right)$$ $$=c_1^ni_1^{-1/2}(i_2-i_1)^{-1/2}\cdots (i_n-i_{n-1})^{-1/2} $$ $$\times
\left((N-i_n)^2+2(N-i_n)i_n+3\sum^n_{k=1}(i_k-i_{k-1})^2-2i_n+6\sum^n_{k=1}(i_k-i_{k-1})i_{k-1}\right)$$ $$\leq c_1^nN^2\prod^n_{k=1}(i_k-i_{k-1})^{-1/2}$$ where in the first inequality we also use Lemma \ref{6} to compute the second moment, in the second equality we use Lemma \ref{8} and in the last inequality we use Lemma \ref{9}.
\end{proof}

\begin{lm} \label{11}\begin{eqnarray*}\sum_{1\leq i_1< \cdots <i_n\leq N}c^n_1c^{2n}_{N,1}N^2\prod^n_{k=1}(i_k-i_{k-1})^{-1/2}\leq N^2\left(c_1c^2_{N,1}N^{1/2}\right)^n\end{eqnarray*}\end{lm}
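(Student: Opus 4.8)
The plan is to recognize that this is precisely Lemma \ref{5} with an inert factor of $N^2$ attached. Since the indices $i_1,\ldots,i_n$ are summed over $1\le i_1<\cdots<i_n\le N$ while $N^2$ does not involve any of them, I would first pull $N^2$ outside the sum:
\begin{eqnarray*}\sum_{1\leq i_1< \cdots <i_n\leq N}c^n_1c^{2n}_{N,1}N^2\prod^n_{k=1}(i_k-i_{k-1})^{-1/2}=N^2\sum_{1\leq i_1< \cdots <i_n\leq N}c^n_1c^{2n}_{N,1}\prod^n_{k=1}(i_k-i_{k-1})^{-1/2}.\end{eqnarray*}
The remaining sum is exactly the left-hand side of Lemma \ref{5}, which is bounded there by $\left(c_1c^2_{N,1}N^{1/2}\right)^n$; multiplying back by $N^2$ gives the stated inequality.

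Should one want a self-contained derivation rather than a citation, I would repeat the telescoping argument of Lemma \ref{5}. One sums the innermost index first, using $\sum_{k=1}^{N}k^{-1/2}\le 1+\int_1^N x^{-1/2}\,dx\le 2N^{1/2}$ to get $\sum_{i_n=i_{n-1}+1}^{N}(i_n-i_{n-1})^{-1/2}\le 2N^{1/2}$, absorbs the resulting factor $2$ into $c_1$, and then strips off the sums over $i_{n-1},\ldots,i_1$ one at a time, each step contributing another factor of at most $c_1N^{1/2}$. After $n$ steps the surviving quantity is $c_1^n c_{N,1}^{2n}N^{n/2}N^2 = N^2\left(c_1c^2_{N,1}N^{1/2}\right)^n$, where $c_1$ changes from line to line but depends only on $d=1$.

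I do not expect any genuine obstacle here; the statement is a routine corollary of Lemma \ref{5}. The only point requiring a little care is the usual convention that the constant $c_1$ may grow from line to line (it swallows the factor $2$ coming from the bound on $\sum k^{-1/2}$ at each of the $n$ stages) while remaining a function of the dimension alone. Together with Lemmas \ref{3} and \ref{10}, and then summing over $0\le n\le N$, this estimate delivers Proposition \ref{d12M} ii).
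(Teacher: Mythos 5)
Your proof is correct and matches the paper's, which simply cites Lemma \ref{5}; you spell out the one-line reason (the factor $N^2$ is independent of the summation indices and pulls out of the sum) and optionally re-run the telescoping estimate. Nothing further is needed.
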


\begin{proof} It follows from Lemma \ref{5}.
\end{proof}

We conclude by Lemma \ref{3} that Proposition \ref{d12M} ii) holds.

\subsection*{Section 3.3}
In this subsection, we are going to use Proposition \ref{d12M} to show Theorem \ref{MTHM} for $d=1$. We do so with a series of lemmas.

\begin{lm}\label{12}\begin{eqnarray*} E_Q\left((Z(N)-1)^2\right)\leq \sum^N_{n=1}\left(c_1c^2_{N,1}N^{1/2}\right)^n\end{eqnarray*}\end{lm}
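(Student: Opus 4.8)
The plan is to reduce Lemma \ref{12} to Proposition \ref{d12M} i) by first computing $E_Q(Z(N))$ exactly. Using the expansion $Z(N)=\sum_{n=0}^N g_n$ from the proof of Lemma \ref{2}, where $g_0=\int 1\,dP^N_0(\omega)=1$ and, for $n\geq 1$, $g_n=\int\sum_{1\leq i_1<\cdots<i_n\leq N}c^n_{N,1}\prod_{k=1}^n h(i_k,\omega(i_k))\,dP^N_0(\omega)$, the independence and mean-zero property of the environment variables $h(n,x)$ give $E_Q(g_n)=0$ for every $n\geq 1$, hence $E_Q(Z(N))=1$. Consequently
$$E_Q\big((Z(N)-1)^2\big)=E_Q(Z^2(N))-2E_Q(Z(N))+1=E_Q(Z^2(N))-1.$$

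Next I would invoke Proposition \ref{d12M} i), which gives $E_Q(Z^2(N))\leq\sum_{n=0}^N\big(c_1c^2_{N,1}N^{1/2}\big)^n$. Since the $n=0$ term of this sum equals $1$, subtracting $1$ leaves precisely $\sum_{n=1}^N\big(c_1c^2_{N,1}N^{1/2}\big)^n$, which is the claimed bound. Equivalently, one can argue directly from the orthogonality established in the proof of Lemma \ref{2}: $Z(N)-1=\sum_{n=1}^N g_n$ with $E_Q(g_ng_m)=0$ for $n\neq m$, so $E_Q((Z(N)-1)^2)=\sum_{n=1}^N E_Q(g_n^2)$, and then bound each $E_Q(g_n^2)$ by $\big(c_1c^2_{N,1}N^{1/2}\big)^n$ via Lemma \ref{4} followed by Lemma \ref{5}.

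There is essentially no obstacle: the statement is a bookkeeping corollary of work already done. The only point requiring a moment's care is confirming that the $n=0$ contribution is exactly $1$ (equivalently, $g_0=1$ and $E_Q(g_0^2)=1$), so that subtracting $1$ removes exactly the $n=0$ term and nothing else; this is immediate because $p_0(N,x)$ and $P^N_0$ are probability measures.
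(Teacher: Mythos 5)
Your proposal is correct and follows essentially the same route as the paper: compute $E_Q(Z(N))=1$ from the mean-zero environment, reduce to $E_Q(Z^2(N))-1$, and strip off the $n=0$ term (which equals $1$) before applying the bound of Proposition \ref{d12M} i). The alternative orthogonality formulation you mention ($E_Q((Z(N)-1)^2)=\sum_{n=1}^N E_Q(g_n^2)$) is just a repackaging of the same computation already carried out in Lemma \ref{2}, so there is nothing further to check.
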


\begin{proof}\begin{eqnarray*}E_Q\left((Z(N)-1)^2\right)&=&E_Q\left(Z^2(N)-2Z(N)+1\right)\\&=&E_Q(Z^2(N))-2E_Q(Z(N))+1\\&=&E_Q(Z^2(N))-2+1\\&=&E_Q(Z^2(N))-1 \\&=&\sum^N_{n=1}\sum_{1\leq i_1< \cdots <i_n\leq N}c^{2n}_{N,1}\sum_{x_1,\ldots, x_n}\prod^n_{k=1}p^2_0(i_k-i_{k-1}, x_k-x_{k-1})\\&\leq& \sum^N_{n=1}\left(c_1c^2_{N,1}N^{1/2}\right)^n\end{eqnarray*} where in the third equality we use $E_Q(Z(N))=1$ because the $h(n,x)$ have mean $0$, in the fifth equality we use that the $0$-th term in the second moment expansion of $Z(N)$ is $1$ (see Lemma \ref{2}) and the last inequality follows from Proposition \ref{d12M} i).\end{proof}

\begin{lm}\label{13}\begin{eqnarray*}E_Q\left((K(N)-N)^2\right)\leq N^2\sum^N_{n=1}\left(c_1c^2_{N,1}N^{1/2}\right)^n\end{eqnarray*}\end{lm}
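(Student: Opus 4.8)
The plan is to mimic exactly the computation in Lemma \ref{12}, but now for $K(N)$ instead of $Z(N)$. First I would expand the square: $E_Q\left((K(N)-N)^2\right)=E_Q(K^2(N))-2NE_Q(K(N))+N^2$. So the central fact I need is that $E_Q(K(N))=N$. This follows because $K(N)=\int\prod_{1\leq n\leq N}\left[1+c_{N,d}h(n,\omega(n))\right]\omega(N)^2dP^N_0(\omega)$, and since the $h(n,x)$ have mean $0$ and are independent, $E_Q$ of the product over the environment is $1$, so $E_Q(K(N))=\int\omega(N)^2dP^N_0(\omega)=\sum_x x^2p_0(N,x)=N$ by the standard moment computation $\sum_x x^2 p_0(n,x)=n$ recalled just before Lemma \ref{6}. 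Hence $E_Q\left((K(N)-N)^2\right)=E_Q(K^2(N))-N^2$.

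Next I would use Lemma \ref{3}, which gives the second-moment expansion of $K(N)$ as a sum over $n$ from $0$ to $N$. The $n=0$ term of that expansion is $\left(\sum_x x^2 p_0(N,x)\right)^2=N^2$ (the empty product over $k$ equals $1$, $x_0=0$, and $i_0=0$ so $N-i_n=N$). Subtracting the $n=0$ term therefore cancels the $N^2$ exactly, leaving
$$E_Q\left((K(N)-N)^2\right)=\sum^N_{n=1}\sum_{1\leq i_1<\cdots<i_n\leq N}c^{2n}_{N,1}\sum_{x_1,\ldots,x_n}\prod^n_{k=1}p^2_0(i_k-i_{k-1},x_k-x_{k-1})\left(\sum_x x^2 p_0(N-i_n,x-x_n)\right)^2.$$
Then I would apply Lemma \ref{10} to bound the inner sum over $x_1,\ldots,x_n$ (for each fixed tuple $i_1<\cdots<i_n$) by $c_1^n N^2\prod^n_{k=1}(i_k-i_{k-1})^{-1/2}$, and finally Lemma \ref{11} to bound the remaining sum over the $i_k$'s by $N^2\left(c_1 c^2_{N,1}N^{1/2}\right)^n$. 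Summing over $n$ from $1$ to $N$ gives precisely $E_Q\left((K(N)-N)^2\right)\leq N^2\sum^N_{n=1}\left(c_1 c^2_{N,1}N^{1/2}\right)^n$, as claimed.

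Since Lemmas \ref{3}, \ref{10} and \ref{11} are already established, there is no real obstacle here; the only point requiring slight care is the identification of $E_Q(K(N))=N$ and the matching of the $n=0$ term with $N^2$, which is the analogue of the observation in Lemma \ref{12} that the $0$-th term of the $Z(N)$ expansion is $1$. Everything else is a direct substitution of the previous lemmas into the expanded square.
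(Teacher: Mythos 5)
Your proposal is correct and follows essentially the same route as the paper: expand the square, use $E_Q(K(N))=N$ to reduce to $E_Q(K^2(N))-N^2$, identify the $n=0$ term of the Lemma \ref{3} expansion as $N^2$, and bound the remaining terms. The only cosmetic difference is that you invoke Lemmas \ref{10} and \ref{11} directly where the paper cites Proposition \ref{d12M} ii), but those lemmas are precisely how that proposition is proved, so the argument is the same.
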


\begin{proof}\begin{eqnarray*}E_Q\left((K(N)-N)^2\right)&=&E_Q(K^2(N)-2K(N)N+N^2)\\&=&E_Q(K^2(N))-2NE_Q(K(N))+N^2
\\&=&E_Q(K^2(N))-2N^2+N^2\\&=&
E_Q(K^2(N))-N^2\\&=&\sum^N_{n=1}\sum_{1\leq i_1< \cdots <i_n\leq N}c^{2n}_{N,1}\sum_{x_1,\ldots, x_n} \prod^n_{k=1}p^2_0(i_k-i_{k-1}, x_k-x_{k-1})\\&\times&\left(\sum_x x^2p_0(N-i_n, x-x_n)\right)^2\\&\leq& N^2\sum^N_{n=1}\left(c_1c^2_{N,1}N^{1/2}\right)^n\end{eqnarray*}
where in the third equality we use $E_Q(K(N))=N$ because the $h(n,x)$ has mean zero and second moment of simple random walk of length $N$ in dimension $d=1$ is $N$, in the fifth equality we use that the $0$-th term in the second moment expansion of $K(N)$ is $N^2$ (see Lemma \ref{3}) and the last inequality follows from Proposition \ref{d12M} ii).\end{proof}

\begin{lm}\label{14} $Z(N)\rightarrow 1$ in probability as $N\rightarrow \infty$ \end{lm}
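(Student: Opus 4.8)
The plan is to deduce Lemma~\ref{14} from Lemma~\ref{12} by a straightforward second-moment (Chebyshev) argument. By Lemma~\ref{12} we have $E_Q((Z(N)-1)^2)\leq \sum^N_{n=1}(c_1c^2_{N,1}N^{1/2})^n$. The first step is to observe that the scaling hypothesis (\ref{SLG}) for $d=1$, namely $c^2_{N,1}N^{1/2}\rightarrow 0$, guarantees that for all $N$ large enough we have $c_1c^2_{N,1}N^{1/2}\leq 1/2$ (say), so the geometric-type sum $\sum^N_{n=1}(c_1c^2_{N,1}N^{1/2})^n$ is bounded by $\sum^\infty_{n=1}(c_1c^2_{N,1}N^{1/2})^n = \frac{c_1c^2_{N,1}N^{1/2}}{1-c_1c^2_{N,1}N^{1/2}}\leq 2c_1c^2_{N,1}N^{1/2}$, which tends to $0$ as $N\rightarrow\infty$. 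Hence $E_Q((Z(N)-1)^2)\rightarrow 0$.

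The second step is to convert this $L^2$ convergence into convergence in probability. Since $Z(N)\rightarrow 1$ in $L^2(Q)$ implies $Z(N)\rightarrow 1$ in probability (this is immediate from Chebyshev's inequality: for any $\varepsilon>0$, $Q(|Z(N)-1|>\varepsilon)\leq \varepsilon^{-2}E_Q((Z(N)-1)^2)\rightarrow 0$), the conclusion follows at once.

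I do not expect any genuine obstacle here: both ingredients — the bound from Lemma~\ref{12} and the scaling condition (\ref{SLG}) — are already in hand, and the only thing to be careful about is making sure the finite sum over $n$ from $1$ to $N$ is controlled uniformly in $N$, which is exactly what bounding it by the convergent geometric series accomplishes once $c_1c^2_{N,1}N^{1/2}<1$. In fact this same lemma, applied with $K(N)/N$ in place of $Z(N)$ and using Lemma~\ref{13}, will later give that $K(N)/N\rightarrow 1$ in probability, and then $\langle\omega(N)^2\rangle_{N,h}/N = K(N)/(NZ(N))\rightarrow 1$ in probability by the continuous mapping theorem (a ratio of two quantities each converging to $1$ in probability, with the denominator bounded away from $0$ with probability tending to $1$), which is the route to Theorem~\ref{MTHM} in $d=1$; but for the present statement only the first two steps above are needed.
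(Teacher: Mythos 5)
Your proposal is correct and follows essentially the same route as the paper: Chebyshev's inequality applied to the second-moment bound of Lemma~\ref{12}, combined with the observation that under (\ref{SLG}) the geometric-type sum $\sum^N_{n=1}(c_1c^2_{N,1}N^{1/2})^n$ tends to $0$. Your explicit bound $\leq 2c_1c^2_{N,1}N^{1/2}$ once the ratio is below $1/2$ is a slightly more quantitative version of the paper's $\delta/(1-\delta)$ argument, but the substance is identical.
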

\begin{proof} For any $\epsilon>0$, by Chebyshev's inequality and using Lemma \ref{12}, we have
$$P(|Z(N)-1|>\epsilon)\leq \frac{E_Q((Z(N)-1)^2)}{\epsilon^2}\leq \frac{\sum^N_{n=1}\left(c_1c^2_{N,1}N^{1/2}\right)^n}{\epsilon^2}$$ Let $f(N)=c_1c^2_{N,1}N^{1/2}$. By choice of $c_{N,1}$ (see (\ref{SLG})), $\lim_{N\rightarrow \infty}c^2_{N,1}N^{1/2}=0$, in particular $\lim_{N\rightarrow \infty}f(N)=0$, which says given $\delta>0$ small, there exists $K$ such that for $N\geq K$, $f(N)<\delta$ (note that $f(N)\geq 0$), but then $$S_N:=\sum^N_{n=1}f(N)^n< \sum^N_{n=1}\delta^n=\frac{\delta-\delta^{N+1}}{1-\delta}<\frac{\delta}{1-\delta}$$ Since $\delta>0$ is arbitrary and for large $N$, $S_N$ is arbitrarily small, so $S_N\rightarrow 0$ as $N\rightarrow \infty$. We conclude $Z(N)\rightarrow 1$ in probability as $N\rightarrow \infty$.
\end{proof}

\begin{lm}\label{15}$\frac{1}{Z(N)}\rightarrow 1$ in probability as $N\rightarrow \infty$\end{lm}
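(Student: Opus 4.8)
The plan is to deduce this immediately from Lemma \ref{14}, since the map $x\mapsto 1/x$ is continuous at $x=1$; the only subtlety is that this map blows up near $0$, so I first need to trap $Z(N)$ away from zero on a high-probability event before taking reciprocals.

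First I would fix $\epsilon>0$ and assume without loss of generality that $\epsilon<1$. On the event $A_N:=\{|Z(N)-1|<\epsilon/2\}$ we have $Z(N)>1-\epsilon/2>1/2>0$, so in particular $1/Z(N)$ is well-defined and
\[
\left|\frac{1}{Z(N)}-1\right|=\frac{|Z(N)-1|}{Z(N)}<\frac{\epsilon/2}{1/2}=\epsilon .
\]
Hence $A_N\subseteq\{|1/Z(N)-1|<\epsilon\}$, and passing to complements gives $\{|1/Z(N)-1|\ge\epsilon\}\subseteq\{|Z(N)-1|\ge\epsilon/2\}$, so
\[
P\!\left(\left|\frac{1}{Z(N)}-1\right|>\epsilon\right)\le P\!\left(|Z(N)-1|\ge\frac{\epsilon}{2}\right)\le P\!\left(|Z(N)-1|>\frac{\epsilon}{4}\right).
\]

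Finally I would invoke Lemma \ref{14}, which gives that the right-hand side tends to $0$ as $N\to\infty$. Since $\epsilon>0$ was arbitrary, this shows $1/Z(N)\to 1$ in probability, as claimed. The main (and really only) point requiring care is keeping $Z(N)$ bounded away from $0$ so that the reciprocal is controlled; the choice of threshold $\epsilon/2$ handles this cleanly. An alternative would be to cite the continuous mapping theorem directly, but the elementary estimate above is self-contained and just as short.
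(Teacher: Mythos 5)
Your proof is correct and takes essentially the same route as the paper: both arguments deduce the claim from Lemma \ref{14} by trapping $Z(N)$ near $1$ (hence away from $0$) on a high-probability event and then controlling the reciprocal, differing only in bookkeeping (you fix the target accuracy $\epsilon$ for $1/Z(N)$ and back out the threshold $\epsilon/2$ for $Z(N)$, while the paper starts from a threshold on $Z(N)$ and derives the resulting accuracy $\hat{\epsilon}$ for $1/Z(N)$). Your version is, if anything, slightly cleaner.
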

\begin{proof} By Lemma \ref{14}, for $\epsilon>0$ small such that $1-\epsilon>0$ and given $\delta>0$, there exists $K$ such that for $N\geq K$, $P(|Z(N)-1|\leq \epsilon)=1-P(|Z(N)-1|>\epsilon)>1-\delta$. But \begin{eqnarray*}P(|Z(N)-1|\leq \epsilon)&=&P(1-\epsilon\leq Z(N)\leq 1+\epsilon)\\&=&P\left(\frac{1}{1+\epsilon}\leq \frac{1}{Z(N)}\leq \frac{1}{1-\epsilon}\right)\\&=&P\left(1-\epsilon''\leq \frac{1}{Z(N)}\leq 1+\epsilon'\right)\\&\leq&P\left(1-\hat{\epsilon}\leq \frac{1}{Z(N)}\leq 1+\hat{\epsilon}\right)\end{eqnarray*} where first equality holds because we assume $\epsilon>0$ is small such that $1-\epsilon>0$, and $\epsilon'=\frac{1}{1-\epsilon}-1>0$, $\epsilon''=1-\frac{1}{1+\epsilon}>0$, $\hat{\epsilon}=\max\{\epsilon',\epsilon''\}$, so $1-\delta\leq P\left(1-\hat{\epsilon}\leq \frac{1}{Z(N)}\leq 1+\hat{\epsilon}\right)$ and we have $P\left(|\frac{1}{Z(N)}-1|>\epsilon\right)\rightarrow 0$ for all $\epsilon>0$ small. But we note for $\epsilon'>\epsilon$, $P\left(|\frac{1}{Z(N)}-1|>\epsilon'\right)\leq P\left(|\frac{1}{Z(N)}-1|>\epsilon\right)$, so $P\left(|\frac{1}{Z(N)}-1|>\epsilon\right)\rightarrow 0$ holds for any $\epsilon>0$. Thus $\frac{1}{Z(N)}\rightarrow 1$ in probability as $N\rightarrow \infty$.
\end{proof}

\begin{lm}\label{16}$\frac{K(N)}{N}\rightarrow 1$ in probability as $N\rightarrow \infty$\end{lm}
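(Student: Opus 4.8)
The plan is to mirror the argument of Lemma \ref{14}, replacing the second moment bound for $Z(N)-1$ by the one for $K(N)-N$ supplied by Lemma \ref{13}. First I would write $K(N)/N - 1 = (K(N)-N)/N$, so that for any $\epsilon>0$, by Chebyshev's inequality,
\begin{eqnarray*}
P\left(\left|\frac{K(N)}{N}-1\right|>\epsilon\right)=P\left(|K(N)-N|>\epsilon N\right)\leq \frac{E_Q\left((K(N)-N)^2\right)}{\epsilon^2N^2}.
\end{eqnarray*}

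Next I would invoke Lemma \ref{13} to bound the numerator, which gives
\begin{eqnarray*}
P\left(\left|\frac{K(N)}{N}-1\right|>\epsilon\right)\leq \frac{N^2\sum^N_{n=1}\left(c_1c^2_{N,1}N^{1/2}\right)^n}{\epsilon^2N^2}=\frac{1}{\epsilon^2}\sum^N_{n=1}\left(c_1c^2_{N,1}N^{1/2}\right)^n.
\end{eqnarray*}
The factor $N^2$ from Lemma \ref{13} is exactly cancelled by the $N^2$ produced by Chebyshev, which is the point of dividing $K(N)$ by $N$.

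Finally I would argue, exactly as in the end of the proof of Lemma \ref{14}, that the sum $S_N:=\sum^N_{n=1}\left(c_1c^2_{N,1}N^{1/2}\right)^n$ tends to $0$ as $N\to\infty$: setting $f(N)=c_1c^2_{N,1}N^{1/2}\geq 0$, the normalization \eqref{SLG} gives $f(N)\to 0$, so for $\delta>0$ and $N$ large enough one has $f(N)<\delta$ and hence $S_N<\sum_{n\geq 1}\delta^n=\delta/(1-\delta)$, which is arbitrarily small. Therefore $P(|K(N)/N-1|>\epsilon)\to 0$ for every $\epsilon>0$, i.e. $K(N)/N\to 1$ in probability.

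I do not expect any genuine obstacle here: the content has already been extracted into Lemma \ref{13}, and what remains is the same routine Chebyshev-plus-geometric-series estimate used for $Z(N)$ in Lemma \ref{14}. The only thing worth flagging is the bookkeeping point that the normalization by $N$ is precisely what is needed to absorb the $N^2$ in the variance bound, so that the resulting series is the same small quantity $S_N$ that already appeared for the partition function.
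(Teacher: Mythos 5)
Your proposal is correct and follows exactly the same route as the paper: Chebyshev applied to $P(|K(N)-N|>\epsilon N)$, the second moment bound from Lemma \ref{13} with the $N^2$ cancelling, and the geometric-series argument from Lemma \ref{14} to see the remaining sum tends to $0$. Nothing to add.
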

\begin{proof} For any $\epsilon>0$, by Chebyshev inequality and using Lemma \ref{13}, we have $$P\left(\left|\frac{K(N)}{N}-1\right|>\epsilon\right)=P(|K(N)-N|>N\epsilon)\leq \frac{E_Q((K(N)-N)^2)}{\epsilon^2N^2}\leq \frac{N^2\sum^{N}_{n=1}\left(c_1c^2_{N,1}N^{1/2}\right)^n}{\epsilon^2N^2}$$ As before since $\lim_{N\rightarrow\infty}\sum^N_{n=1}\left(c_1c^2_{N,1}N^{1/2}\right)^n=0$, we see $\frac{K(N)}{N}\rightarrow 1$ in probability as $N\rightarrow \infty$
\end{proof}

Now we are ready to show Theorem \ref{MTHM} for dimension $d=1$.

Since multiplication preserves convergence in probability, given $X_n\rightarrow X$ and $Y_n\rightarrow Y$ in probability, then $X_n\cdot Y_n\rightarrow X\cdot Y$ in probability, and recall that the mean square displacement of the polymer is $\langle \omega(N)^2\rangle_{N,h}=\frac{K(N)}{Z(N)}$, then $$\frac{\langle \omega(N)^2\rangle_{N,h}}{N}=\frac{K(N)}{N}\cdot \frac{1}{Z(N)}$$ By Lemma \ref{15}, $\frac{1}{Z(N)}\rightarrow 1$ in probability and by Lemma \ref{16}, $\frac{K(N)}{N}\rightarrow 1$ in probability, we conclude $\frac{\langle \omega(N)^2\rangle_{N,h}}{N}\rightarrow 1$ in probability.

\section{Diffusivity of Rescaled Random Polymer in $d=2$}
In this section, we are going to show Proposition \ref{d22M} and Theorem \ref{MTHM} for dimension $d=2$.

\subsection*{Section 4.1}
In this subsection, we are going to show Proposition \ref{d22M} i) in a series of lemmas.

\begin{lm}\label{17} \begin{eqnarray*}\sum_{x_1,\ldots, x_n}\prod^n_{k=1}p^2_0(i_k-i_{k-1}, x_k-x_{k-1})\leq c_2^n\prod^n_{k=1}(i_k-i_{k-1})^{-1}\end{eqnarray*} for some constant $c_2$ that depends only on the dimension $d=2$\end{lm}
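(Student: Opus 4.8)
The plan is to mirror the proof of Lemma \ref{4} essentially line for line; the only genuinely new ingredient is the $d=2$ version of the pointwise Gaussian bound on $p_0$. First I would specialize the local limit estimate (\ref{GLE}) to $d=2$. There the main term is $2\left(\frac{2}{2\pi m}\right)\exp\!\left(-\frac{|x|^2}{m}\right)=\frac{2}{\pi m}\exp\!\left(-\frac{|x|^2}{m}\right)$, and the remainder obeys $|r_m(x)|\le c_d m^{-2}$. Since $e^{-|x|^2/m}\le 1$ and $m^{-2}\le m^{-1}$, this yields a uniform bound $p_0(m,x)\le c_2 m^{-1}$ valid for all $x\in\mathbb{Z}^2$ and all $m\ge 1$, with $c_2$ depending only on the dimension $d=2$ (for the few small values of $m$ the bound is clear anyway, as $p_0(m,x)\le 1$). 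This is the exact analogue of the bound $p_0(m,x)\le c_1 m^{-1/2}$ used in dimension one, with the exponent $1/2$ replaced by $1$ because the Gaussian density in $d=2$ decays like $m^{-1}$ rather than $m^{-1/2}$.

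With this uniform estimate in hand, I would carry out the same telescoping computation as in Lemma \ref{4}. In each squared factor $p^2_0(i_k-i_{k-1},x_k-x_{k-1})$ I replace one of the two copies of $p_0$ by its uniform bound $c_2(i_k-i_{k-1})^{-1}$ and keep the other, obtaining
$$\sum_{x_1,\ldots,x_n}\prod^n_{k=1}p^2_0(i_k-i_{k-1},x_k-x_{k-1})\le c_2^n\prod^n_{k=1}(i_k-i_{k-1})^{-1}\sum_{x_1,\ldots,x_n}\prod^n_{k=1}p_0(i_k-i_{k-1},x_k-x_{k-1}).$$
Then I sum out the spatial variables starting from the innermost: for fixed $x_{n-1}$, spatial homogeneity and the fact that $p_0(m,\cdot)$ is a probability distribution give $\sum_{x_n}p_0(i_n-i_{n-1},x_n-x_{n-1})=1$; iterating this for $x_{n-1},\ldots,x_1$ collapses the remaining sum to $1$, which leaves exactly $c_2^n\prod^n_{k=1}(i_k-i_{k-1})^{-1}$, as claimed.

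There is essentially no obstacle; the only point that warrants any care is checking that the bound $p_0(m,x)\le c_2 m^{-1}$ truly holds uniformly in $x$, remainder included — but since $|r_m(x)|\le c_d m^{-2}$ is of strictly smaller order than $m^{-1}$, the Gaussian main term dominates and the bound is immediate. Everything downstream then proceeds just as in Section 3.1: the $d=2$ analogue of Lemma \ref{5} is $\sum_{1\le i_1<\cdots<i_n\le N}c_2^n c_{N,2}^{2n}\prod^n_{k=1}(i_k-i_{k-1})^{-1}\le \left(c_2 c_{N,2}^2\log N\right)^n$, proved by iterating the elementary estimate $\sum_{k=1}^N k^{-1}\le 1+\log N\le 2\log N$ in place of $\sum_{k=1}^N k^{-1/2}\le 2N^{1/2}$, and combining with Lemma \ref{2} gives Proposition \ref{d22M} i).
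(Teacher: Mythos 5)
Your proof is correct and follows essentially the same route as the paper: specialize the uniform bound $p_0(m,x)\le c_2 m^{-1}$ from the $d=2$ local limit estimate (\ref{GLE}), peel off one factor of $p_0$ from each square, and sum the remaining transition probabilities to $1$ exactly as in Lemma \ref{4}. The only difference is that you spell out the Gaussian main term and remainder explicitly, which the paper leaves implicit.
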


\begin{proof} For $d=2$, we see (\ref{GLE}) is at most $c_2n^{-1}$ for some constant $c_2$. As in the proof of Lemma \ref{4} using this uniform estimate for the $p_0(i_k-i_{k-1},x_k-x_{k-1})$'s, we have $$\sum_{x_1,\ldots, x_n}\prod^n_{k=1}p^2_0(i_k-i_{k-1}, x_k-x_{k-1})=\sum_{x_1}p^2_0(i_1,x_1)\cdots \sum_{x_n}p^2_0(i_n-i_{n-1},x_n-x_{n-1})$$ $$\leq c_2^ni_1^{-1}\cdots (i_n-i_{n-1})^{-1}\sum_{x_1}p_0(i_1,x_1)\cdots \sum_{x_n}p_0(i_n-i_{n-1},x_n-x_{n-1})=c_2^n\prod^n_{k=1}(i_k-i_{k-1})^{-1}$$ for some constant $c_2$ that depends only on the dimension $d=2$\end{proof}

\begin{lm}\label{18} \begin{eqnarray*}\sum_{1\leq i_1< \cdots <i_n\leq N}c_2^nc^{2n}_{N,2}\prod^n_{k=1}(i_k-i_{k-1})^{-1}\leq \left(c_2c^2_{N,2} \log N\right)^n\end{eqnarray*}
\end{lm}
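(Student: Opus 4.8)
The plan is to mimic the argument of Lemma \ref{5} verbatim, with the single change that the harmonic-type sum $\sum_{k=1}^{N}k^{-1}$ replaces $\sum_{k=1}^{N}k^{-1/2}$, which is exactly what turns the factor $N^{1/2}$ into $\log N$. Concretely, I would first fix the combinatorial structure of the sum: since $i_0=0$ the product is $\prod_{k=1}^n(i_k-i_{k-1})^{-1}=i_1^{-1}(i_2-i_1)^{-1}\cdots(i_n-i_{n-1})^{-1}$, and the ordered sum over $1\le i_1<\cdots<i_n\le N$ can be written as the iterated sum $\sum_{i_1=1}^{N-(n-1)}\cdots\sum_{i_{n-1}=i_{n-2}+1}^{N-1}\sum_{i_n=i_{n-1}+1}^{N}$.

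The key elementary estimate is that for the innermost sum, substituting $j=i_n-i_{n-1}$,
$$\sum_{i_n=i_{n-1}+1}^{N}(i_n-i_{n-1})^{-1}=\sum_{j=1}^{N-i_{n-1}}j^{-1}\le\sum_{j=1}^{N}j^{-1}\le 1+\int_1^{N}x^{-1}\,dx=1+\log N\le 2\log N$$
for $N$ large enough (say $N\ge 3$). Peeling off this innermost sum produces a bound with a factor $2\log N$ times the same sum over $i_1,\dots,i_{n-1}$ with $n-1$ in place of $n$; iterating $n$ times (or, if one prefers, a one-line induction on $n$) each of the $n$ sums contributes a factor at most $2\log N$, giving
$$\sum_{1\le i_1<\cdots<i_n\le N}\prod_{k=1}^n(i_k-i_{k-1})^{-1}\le (2\log N)^n.$$
Multiplying by $c_2^n c_{N,2}^{2n}$ and absorbing the harmless constant $2^n$ into $c_2^n$ (the constant $c_2$ depends only on the dimension and is allowed to change from line to line, exactly as in Lemmas \ref{4}--\ref{5}) yields $\bigl(c_2 c_{N,2}^2\log N\bigr)^n$, as claimed. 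The case of small $N$ is absorbed into the constant as well.

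There is no real obstacle here: the argument is entirely parallel to Lemma \ref{5}, and the only point requiring the slightest care is the bookkeeping of the constant $2$ across the $n$ nested sums and the harmonic-sum bound $\sum_{k=1}^N k^{-1}\le 1+\log N$; everything else is a formal rewriting of the ordered sum as an iterated sum together with Lemma \ref{17}. One should also keep in mind that condition (\ref{SLG}) for $d=2$, namely $c_{N,2}^2\log N\to 0$, is what makes this bound useful downstream, but it plays no role in the proof of the inequality itself.
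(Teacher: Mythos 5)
Your proposal is correct and follows essentially the same route as the paper: both rewrite the ordered sum as an iterated sum, bound each inner sum by $\sum_{k=1}^N k^{-1}\le 1+\log N\le C\log N$ (the paper uses $C=10$, you use $C=2$), and absorb the resulting constant into $c_2$ across the $n$ factors. No substantive difference.
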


\begin{proof}
$$\sum_{1\leq i_1< \cdots <i_n\leq N}c_2^nc^{2n}_{N,2}\prod^n_{k=1}(i_k-i_{k-1})^{-1} $$ $$=c_2^nc^{2n}_{N,2}\sum^{N-(n-1)}_{i_1=1}\cdots \sum^{N-1}_{i_{n-1}=i_{n-2}+1}i_1^{-1}\cdots (i_{n-1}-i_{n-2})^{-1} \sum^N_{i_n=i_{n-1}+1}(i_n-i_{n-1})^{-1}$$ $$\leq c_2^nc_{N,2}^{2n}\sum^{N-(n-1)}_{i_1=1}\cdots \sum^{N-1}_{i_{n-1}=i_{n-2}+1}i_1^{-1}\cdots (i_{n-1}-i_{n-2})^{-1} 10\log N$$ Last inequality holds because $\sum^{N}_{k=1}k^{-1}\leq 1+\int^N_1x^{-1}dx=1+\log N\leq 10\log N$. Continuing from above and arguing similarly to estimate each sum in the expression we have
$$\leq c_2^nc^{2n}_{N,2}\log N\sum^{N-(n-1)}_{i_1=1}\cdots \sum^{N-1}_{i_{n-1}=i_{n-2}+1}i_1^{-1}\cdots (i_{n-1}-i_{n-2})^{-1}\leq \left(c_2c^{2}_{N,2}\log N\right)^n$$ where the constant $c_2$ will change from line to line (again it depends only on the dimension $d=2$).
\end{proof}
We conclude by Lemma \ref{2} that Proposition \ref{d22M} i) holds.

\subsection*{Section 4.2}In this subsection, we are going to show Proposition \ref{d22M} ii) in a series of lemmas.

By standard computations of the partial moments of simple random walk of length $n$ in dimension $d=2$ using characteristic function, for $x=(x_1,x_2)$, we have $$\sum_xx^2_1p_0(n,x)=\frac{n}{2};\ \ \sum_xx^4_1p_0(n,x)=\frac{3n^2-n}{4};\ \ \sum_{x}x^2_1x^2_2p_0(n,x)=\frac{n(n-1)}{4}$$ so the second and fourth moments are respectively $$\sum_x|x|^2p_0(n,x)=n;\ \ \ \sum_x|x|^4p_0(n,x)=2n^2-n$$

\begin{lm}\label{19} \begin{eqnarray*}\sum_{x}|x|^2p_0(N-i_n,x-x_n)=(N-i_n)+|x_n|^2\end{eqnarray*}\end{lm}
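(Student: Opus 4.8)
The statement to prove is Lemma~\ref{19}: $\sum_{x}|x|^2p_0(N-i_n,x-x_n)=(N-i_n)+|x_n|^2$ in dimension $d=2$.

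This is the $d=2$ analogue of Lemma~\ref{6}, so the plan is to mimic that proof verbatim, replacing the scalar $x^2$ by the squared Euclidean norm $|x|^2$. First I would reindex the sum: since summing over all $x\in\mathbb{Z}^2$ (subject to the parity constraint) is the same as summing over all $x-x_n$, write $\sum_x |x|^2 p_0(N-i_n,x-x_n)=\sum_{x-x_n}|x|^2 p_0(N-i_n,x-x_n)=\sum_x |x+x_n|^2 p_0(N-i_n,x)$. Then expand the norm square: $|x+x_n|^2=|x|^2+2\,x\cdot x_n+|x_n|^2$, where $x\cdot x_n=x_1 (x_n)_1+x_2(x_n)_2$ is the usual inner product. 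This splits the sum into three pieces.

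Next I would evaluate each piece. The first piece is $\sum_x|x|^2 p_0(N-i_n,x)=N-i_n$ by the stated second-moment formula for the simple random walk of length $N-i_n$ in $d=2$. The middle piece is $2\sum_x (x\cdot x_n) p_0(N-i_n,x)=2\bigl((x_n)_1\sum_x x_1 p_0(N-i_n,x)+(x_n)_2\sum_x x_2 p_0(N-i_n,x)\bigr)$, and each of these first moments vanishes because the increments of the simple random walk are symmetric (equivalently, any odd partial moment is zero, just as in Lemma~\ref{6}). The third piece is $|x_n|^2\sum_x p_0(N-i_n,x)=|x_n|^2$ since $p_0$ is a probability distribution. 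Adding the three contributions gives $(N-i_n)+|x_n|^2$, which is the claim.

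There is no real obstacle here; the only point requiring a word of care is the parity reindexing (the nonzero values of $p_0(m,\cdot)$ live on the sublattice of fixed parity, but the translation by $x_n$ is a bijection of that sublattice with the correspondingly shifted one, and the cross-term argument only needs the reflection symmetry $x\mapsto -x$ of that set, which holds). So the proof is a routine two-line computation parallel to Lemma~\ref{6}, and I would present it in the same style.
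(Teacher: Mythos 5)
Your proof is correct and matches the paper's argument exactly: reindex by translation, expand $|x+x_n|^2$, kill the cross term by the vanishing of odd partial moments, and evaluate the remaining two sums via the second-moment formula and normalization of $p_0$. Nothing further is needed.
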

\begin{proof} $$\sum_x |x|^2p_0(N-i_n,x-x_n)=\sum_{x-x_n}|x|^2p_0(N-i_n,x-x_n)=\sum_x|x+x_n|^2p_0(N-i_n,x)$$ $$=\sum_x|x|^2p_0(N-i_n,x)+\sum_x|x_n|^2p_0(N-i_n,x)=(N-i_n)+|x_n|^2$$ where third equality holds because any odd partial moments of simple random walk vanish.\end{proof}

\begin{lm}\label{20}\begin{eqnarray*}\sum_{x_k}|x_k|^4p_0(i_k-i_{k-1},x_k-x_{k-1})=2(i_k-i_{k-1})^2-(i_k-i_{k-1})+|x_{k-1}|^4+4|x_{k-1}|^2(i_k-i_{k-1})
\end{eqnarray*}\end{lm}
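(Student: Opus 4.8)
The plan is to mimic the computation in Lemma \ref{19} (and its one-dimensional counterpart Lemma \ref{7}): shift the summation variable so that the transition probability is centred at the origin, expand the resulting fourth power, and discard the terms that are odd in the new variable. Writing $m=i_k-i_{k-1}$ for brevity, the first move is to replace the sum over $x_k$ by a sum over $y:=x_k-x_{k-1}$, which is legitimate because both range over the same lattice, giving
\begin{eqnarray*}\sum_{x_k}|x_k|^4p_0(m,x_k-x_{k-1})=\sum_{y}|y+x_{k-1}|^4p_0(m,y).\end{eqnarray*}

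Next I would expand $|y+x_{k-1}|^4=\bigl(|y|^2+2\,y\cdot x_{k-1}+|x_{k-1}|^2\bigr)^2$ into its six monomial pieces: $|y|^4$, $4(y\cdot x_{k-1})^2$, $|x_{k-1}|^4$, $2|y|^2|x_{k-1}|^2$, $4|y|^2(y\cdot x_{k-1})$, and $4(y\cdot x_{k-1})|x_{k-1}|^2$. When summed against $p_0(m,y)$ the last two vanish, since they are odd in $y$ and all odd partial moments of the two-dimensional simple random walk are zero (the same fact already used in Lemma \ref{19}). The term $2|y|^2|x_{k-1}|^2$ contributes $2m|x_{k-1}|^2$ using $\sum_y|y|^2p_0(m,y)=m$, and $|y|^4$ contributes $2m^2-m$ using the fourth-moment formula $\sum_y|y|^4p_0(m,y)=2m^2-m$ recorded above Lemma \ref{19}.

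The only genuinely new ingredient is the mixed sum $\sum_y(y\cdot x_{k-1})^2p_0(m,y)$. Writing $y=(y_1,y_2)$ and $x_{k-1}=(a,b)$, one expands $(y\cdot x_{k-1})^2=a^2y_1^2+2ab\,y_1y_2+b^2y_2^2$; the cross term drops out because $\sum_yy_1y_2p_0(m,y)=0$ by the reflection symmetry of the walk, and again by symmetry $\sum_yy_1^2p_0(m,y)=\sum_yy_2^2p_0(m,y)=m/2$, so this sum equals $\tfrac{m}{2}(a^2+b^2)=\tfrac{m}{2}|x_{k-1}|^2$. Multiplying by the coefficient $4$ gives $2m|x_{k-1}|^2$, and collecting all the surviving pieces yields $(2m^2-m)+2m|x_{k-1}|^2+|x_{k-1}|^4+2m|x_{k-1}|^2=2m^2-m+4m|x_{k-1}|^2+|x_{k-1}|^4$, which is the claimed identity after substituting back $m=i_k-i_{k-1}$. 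There is no real obstacle here; the bookkeeping of the multinomial expansion and the correct use of the partial-moment formula $\sum_xx_1^2p_0(n,x)=n/2$ (as opposed to the full second moment $n$, which caused the discrepancy between the $d=1$ coefficient $3$ in Lemma \ref{7} and the $d=2$ coefficient $2$ here) are the only places where care is needed.
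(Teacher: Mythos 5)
Your proposal is correct and follows essentially the same route as the paper: shift the summation variable, expand $|y+x_{k-1}|^4$, drop the odd terms, and evaluate the surviving even terms using the full and partial moment formulas $\sum_y|y|^2p_0(m,y)=m$, $\sum_y|y|^4p_0(m,y)=2m^2-m$, and $\sum_yy_1^2p_0(m,y)=m/2$. The only cosmetic difference is that you organize the cross term as $4(y\cdot x_{k-1})^2$ and then expand it, whereas the paper writes its even part $4x_{k-1,1}^2y_1^2+4x_{k-1,2}^2y_2^2$ directly; the arithmetic is identical.
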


\begin{proof}$$\sum_{x_k}|x_k|^4p_0(i_k-i_{k-1},x_k-x_{k-1}) $$ $$=\sum_{x_k-x_{k-1}}|x_k|^4p_0(i_k-i_{k-1},x_k-x_{k-1})=\sum_{x_k}|x_k+x_{k-1}|^4p_0(i_k-i_{k-1},x_k)
$$ $$=\sum_{x_k}|x_k|^4p_0(i_k-i_{k-1},x_k)+|x_{k-1}|^4\sum_{x_k}p_0(i_k-i_{k-1},x_k)+2|x_{k-1}|^2\sum_{x_k}|x_k|^2p_0(i_k-i_{k-1},x_k)
$$ $$+4x^2_{k-1,1}\sum_{x_k}x^2_{k,1}p_0(i_k-i_{k-1},x_k)+4x^2_{k-1,2}\sum_{x_k}x^2_{k,2}p_0(i_k-i_{k-1},x_k)$$
$$=2(i_k-i_{k-1})^2-(i_k-i_{k-1})+|x_{k-1}|^4+2|x_{k-1}|^2(i_k-i_{k-1})+4x^2_{k-1,1}\frac{i_k-i_{k-1}}{2}+4x^2_{k-1,2}\frac{i_k-i_{k-1}}{2}$$
$$=2(i_k-i_{k-1})^2-(i_k-i_{k-1})+|x_{k-1}|^4+4|x_{k-1}|^2(i_k-i_{k-1})$$ \end{proof}

\begin{lm}\label{21}
\begin{eqnarray*}\sum_{x_1,\ldots, x_n}\prod^n_{k=1}p_0(i_k-i_{k-1},x_k-x_{k-1})((N-i_n)^2+2(N-i_n)|x_n|^2+|x_n|^4)\end{eqnarray*}
$$ =(N-i_n)^2+2(N-i_n)i_n+2\sum^n_{k=1}(i_k-i_{k-1})^2-i_n+4\sum^n_{k=1}(i_k-i_{k-1})i_{k-1}$$\end{lm}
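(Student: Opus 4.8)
The plan is to prove this by induction on $n$, following the template of the proof of Lemma \ref{8} but feeding in the two-dimensional moment data in place of the one-dimensional data. For the base case $n=1$ I would expand $\sum_{x_1}p_0(i_1,x_1)\big((N-i_1)^2+2(N-i_1)|x_1|^2+|x_1|^4\big)$ term by term, using $\sum_x p_0(i_1,x)=1$, the second moment $\sum_x|x|^2p_0(i_1,x)=i_1$, and the fourth moment $\sum_x|x|^4p_0(i_1,x)=2i_1^2-i_1$ recorded just before Lemma \ref{19}. This gives $(N-i_1)^2+2(N-i_1)i_1+2i_1^2-i_1$, which is exactly the right-hand side for $n=1$; the cross-sum $4\sum_{k=1}^1(i_k-i_{k-1})i_{k-1}$ is absent because $i_0=0$.

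For the inductive step, assume the identity for $n-1$. I would first pull the innermost summation over $x_n$ to the front, writing the left-hand side as $\sum_{x_1,\ldots,x_{n-1}}\prod_{k=1}^{n-1}p_0(i_k-i_{k-1},x_k-x_{k-1})$ times the inner sum $\sum_{x_n}p_0(i_n-i_{n-1},x_n-x_{n-1})\big((N-i_n)^2+2(N-i_n)|x_n|^2+|x_n|^4\big)$. Evaluating this inner sum with Lemma \ref{19} for the $|x_n|^2$ contribution and Lemma \ref{20} for the $|x_n|^4$ contribution collapses it into an expression of the shape $A+B|x_{n-1}|^2+|x_{n-1}|^4$, with $A=(N-i_n)^2+2(N-i_n)(i_n-i_{n-1})+2(i_n-i_{n-1})^2-(i_n-i_{n-1})$ and $B=2(N-i_n)+4(i_n-i_{n-1})$. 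I would then apply the inductive hypothesis in three ways: $\sum_{x_1,\ldots,x_{n-1}}\prod_{k=1}^{n-1}p_0=1$; $\sum_{x_1,\ldots,x_{n-1}}\prod_{k=1}^{n-1}p_0\cdot|x_{n-1}|^2=i_{n-1}$, the second moment of the walk of length $i_{n-1}$; and $\sum_{x_1,\ldots,x_{n-1}}\prod_{k=1}^{n-1}p_0\cdot|x_{n-1}|^4=2\sum_{k=1}^{n-1}(i_k-i_{k-1})^2-i_{n-1}+4\sum_{k=1}^{n-1}(i_k-i_{k-1})i_{k-1}$, which is the identity for $n-1$ with the ambient parameter $N$ specialized to $i_{n-1}$ (so that the $(N-i_{n-1})$ terms vanish). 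This yields $A+Bi_{n-1}+2\sum_{k=1}^{n-1}(i_k-i_{k-1})^2-i_{n-1}+4\sum_{k=1}^{n-1}(i_k-i_{k-1})i_{k-1}$, and a short rearrangement finishes: use $i_n-i_{n-1}+i_{n-1}=i_n$ to merge the $(N-i_n)$ terms, append $(i_n-i_{n-1})^2$ and $(i_n-i_{n-1})i_{n-1}$ to the two respective sums, and note $-(i_n-i_{n-1})-i_{n-1}=-i_n$, which produces the claimed right-hand side for $n$.

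Every computation here is elementary, so the only real work is the bookkeeping in the final rearrangement: correctly sorting which leftover terms fold into $2\sum_{k=1}^n(i_k-i_{k-1})^2$, which into $4\sum_{k=1}^n(i_k-i_{k-1})i_{k-1}$, and which into the linear term $-i_n$. A secondary point worth stating explicitly is the justification of the third use of the inductive hypothesis: Lemma \ref{21} is phrased for the specific quadratic-in-$|x_n|^2$ combination $(N-i_n)^2+2(N-i_n)|x_n|^2+|x_n|^4$, and one should note that setting $N=i_{n-1}$ there isolates precisely the pure fourth-moment sum $\sum\prod p_0\cdot|x_{n-1}|^4$, exactly as in the proof of Lemma \ref{8}.
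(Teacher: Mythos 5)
Your proof is correct and matches the paper's intent exactly: the paper's proof of this lemma is literally ``It follows by induction on $n$ as in Lemma \ref{8}'', and your proposal carries out precisely that induction with the $d=2$ moment formulas, including the one genuinely non-obvious bookkeeping point (that the fourth-moment sum $\sum\prod p_0\cdot|x_{n-1}|^4$ is obtained from the $n-1$ identity by specializing $N=i_{n-1}$), which the paper's Lemma \ref{8} uses only implicitly. No gaps.
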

\begin{proof} It follows by induction on $n$ as in Lemma \ref{8}\end{proof}

\begin{lm}\label{22} \begin{eqnarray*}\sum_{x_1,\ldots, x_n} \prod^n_{k=1}p^2_0(i_k-i_{k-1}, x_k-x_{k-1})\left(\sum_x |x|^2p_0(N-i_n, x-x_n)\right)^2\leq c^n_2N^2\prod^n_{k=1}(i_k-i_{k-1})^{-1}\end{eqnarray*}\end{lm}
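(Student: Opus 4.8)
The plan is to follow the same three-step strategy used for Lemma~\ref{10} in dimension $d=1$, now feeding in the $d=2$ ingredients already established. First I would invoke the uniform Gaussian bound: by (\ref{GLE}) for $d=2$ every transition probability satisfies $p_0(m,x)\le c_2 m^{-1}$ (this is precisely the estimate already used in Lemma~\ref{17}; the error term $r_m(x)$ is $O(m^{-2})$ and hence harmless). Applying this bound to one of the two factors in each $p^2_0(i_k-i_{k-1},x_k-x_{k-1})$ lets me pull the deterministic prefactor $c_2^n\prod_{k=1}^n (i_k-i_{k-1})^{-1}$ out of the sum, leaving
$$\sum_{x_1,\ldots,x_n}\prod_{k=1}^n p_0(i_k-i_{k-1},x_k-x_{k-1})\left(\sum_x|x|^2 p_0(N-i_n,x-x_n)\right)^2.$$
Unlike in Lemma~\ref{17}, I cannot collapse the remaining single $p_0$ factors to $1$ because of the $|x|^2$-weighted factor at the end, so instead I keep them and use the explicit moment computations.

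Second, I would apply Lemma~\ref{19} to rewrite $\sum_x|x|^2 p_0(N-i_n,x-x_n) = (N-i_n)+|x_n|^2$, so that the squared factor expands to $(N-i_n)^2 + 2(N-i_n)|x_n|^2 + |x_n|^4$. The resulting sum over $x_1,\ldots,x_n$ is then exactly the quantity evaluated in Lemma~\ref{21}, hence it equals
$$(N-i_n)^2+2(N-i_n)i_n+2\sum_{k=1}^n(i_k-i_{k-1})^2-i_n+4\sum_{k=1}^n(i_k-i_{k-1})i_{k-1}.$$

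Third, I would bound this explicit expression by $c_2^n N^2$ exactly as in Lemma~\ref{9}: since $0\le i_{k-1}\le i_k\le N$, each of $(N-i_n)^2$, $(N-i_n)i_n$, $(i_k-i_{k-1})^2$, $i_n$, and $(i_k-i_{k-1})i_{k-1}$ is at most $N^2$, and the two sums $\sum_{k=1}^n$ contribute a factor $n$, giving a bound of the form $aN^2+bnN^2\le c_2^nN^2$ after enlarging the dimension-dependent constant. Multiplying back in the prefactor $c_2^n\prod_{k=1}^n(i_k-i_{k-1})^{-1}$ from step one and absorbing everything into a single constant $c_2$ (changing from line to line, depending only on $d=2$) yields the claimed inequality, and the proof would then refer to Lemmas~\ref{19} and \ref{21} at the two places where moments are computed.

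The only point requiring any care — and it is the same as in the $d=1$ argument — is to keep all combinatorial losses (the number of summands, the cross-terms in $(a+b+c)^2$ and in $|x_k+x_{k-1}|^4$) inside a geometric factor $c_2^n$ rather than letting a polynomial-in-$n$ factor escape; this is automatic because $n\le c^n$ for any $c>1$. No genuinely new estimate beyond Lemmas~\ref{19}, \ref{21} and the Gaussian bound (\ref{GLE}) is needed, so I do not expect a real obstacle here.
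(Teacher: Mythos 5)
Your proposal is correct and follows essentially the same route as the paper's own proof: apply the uniform Gaussian bound from (\ref{GLE}) to extract the factor $c_2^n\prod_{k=1}^n(i_k-i_{k-1})^{-1}$, use Lemma \ref{19} to reduce the weighted sum to $(N-i_n)+|x_n|^2$, evaluate via Lemma \ref{21}, and bound the resulting explicit expression by $c_2^nN^2$ as in Lemma \ref{9}. No substantive differences.
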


\begin{proof} Using the uniform estimate (\ref{GLE}) for $d=2$ on the transition probability as in the proof of Lemma \ref{17} we get $$\sum_{x_1,\ldots, x_n} \prod^n_{k=1}p^2_0(i_k-i_{k-1}, x_k-x_{k-1})\left(\sum_x |x|^2p_0(N-i_n, x-x_n)\right)^2 $$ $$\leq c_2^ni_1^{-1}(i_2-i_1)^{-1}\cdots (i_n-i_{n-1})^{-1}\sum_{x_1,\ldots, x_n} \prod^n_{k=1}p_0(i_k-i_{k-1}, x_k-x_{k-1})\left((N-i_n)+|x_n|^2\right)^2$$ $$= c_2^ni_1^{-1}(i_2-i_1)^{-1}\cdots (i_n-i_{n-1})^{-1}\sum_{x_1,\ldots, x_n} \prod^n_{k=1}p_0(i_k-i_{k-1}, x_k-x_{k-1})$$ $$\times \left((N-i_n)^2+2(N-i_n)|x_n|^2+|x_n|^4\right)$$ $$=c_2^ni_1^{-1}(i_2-i_1)^{-1}\cdots (i_n-i_{n-1})^{-1}
$$ $$\times \left((N-i_n)^2+2(N-i_n)i_n+2\sum^n_{k=1}(i_k-i_{k-1})^2-i_n+4\sum^n_{k=1}(i_k-i_{k-1})i_{k-1}\right)$$ $$\leq c_2^nN^2\prod^n_{k=1}(i_k-i_{k-1})^{-1}$$ where in the first inequality we also use Lemma \ref{19} to compute the second moment, in the second equality we use Lemma \ref{21} and and in the last inequality we use estimate similar to that in Lemma \ref{9}.
\end{proof}

\begin{lm}\label{23}\begin{eqnarray*}\sum_{1\leq i_1< \cdots <i_n\leq N}c^n_2c^{2n}_{N,2}N^2\prod^n_{k=1}(i_k-i_{k-1})^{-1}\leq N^2\left(c_2c^2_{N,2}\log N\right)^n\end{eqnarray*}\end{lm}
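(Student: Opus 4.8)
The plan is to recognize this as nothing more than Lemma~\ref{18} carrying an extra multiplicative factor of $N^2$. The factor $N^2$ does not depend on any of the summation variables $i_1,\ldots,i_n$, so it pulls straight out of the sum over $1\leq i_1<\cdots<i_n\leq N$, leaving exactly
$$N^2\sum_{1\leq i_1<\cdots<i_n\leq N}c_2^n c_{N,2}^{2n}\prod_{k=1}^n(i_k-i_{k-1})^{-1}.$$
Applying Lemma~\ref{18} to the remaining sum bounds it by $\left(c_2 c_{N,2}^2\log N\right)^n$, and multiplying back by $N^2$ gives the claimed estimate $N^2\left(c_2 c_{N,2}^2\log N\right)^n$.

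Alternatively, one could re-run the telescoping computation of Lemma~\ref{18} directly: peel off the innermost sum using $\sum_{i_n=i_{n-1}+1}^N(i_n-i_{n-1})^{-1}\leq\sum_{k=1}^N k^{-1}\leq 1+\log N\leq 10\log N$, then iterate the same bound for $i_{n-1},i_{n-2},\ldots,i_1$, absorbing the constant $10$ into $c_2$ at each step (permissible since $c_2$ is allowed to change from line to line and depends only on $d=2$), while the $N^2$ is simply carried along unchanged. This is the exact $d=2$ counterpart of how Lemma~\ref{11} was obtained from Lemma~\ref{5}.

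There is no genuine obstacle here; the only point requiring a little care is that, as in Lemmas~\ref{17} and~\ref{18}, the constant $c_2$ is understood to vary from line to line and to depend only on the dimension. Once Lemma~\ref{18} is available, the present lemma is immediate, and combined with Lemma~\ref{22} it yields Proposition~\ref{d22M}~ii) via Lemma~\ref{3}.
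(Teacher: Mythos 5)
Your proposal is correct and matches the paper's approach exactly: the paper's proof is the one-line "It follows from Lemma \ref{18}," which is precisely your observation that the constant factor $N^2$ pulls out of the sum and Lemma \ref{18} bounds what remains.
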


\begin{proof} It follows from Lemma \ref{18}.
\end{proof}

We conclude by Lemma \ref{3} that Proposition \ref{d22M} ii) holds.

\subsection*{Section 4.3}
In this subsection, we are going to show Theorem \ref{MTHM} for $d=2$.

Clearly, Lemmas \ref{12} and \ref{13} hold for dimension $d=2$ with $c_2c^2_{N,2}\log N$ instead of $c_1c^2_{N,1}N^{1/2}$, precisely we have $$
E_Q\left((Z(N)-1)^2\right)\leq \sum^N_{n=1}\left(c_2c^2_{N,2}\log N\right)^n;\ \ E_Q\left((K(N)-N)^2\right)\leq N^2\sum^N_{n=1}\left(c_2c^2_{N,2}\log N\right)^n$$

By choice of $c_{N,2}$ (see (\ref{SLG})), as in the proof of Lemma \ref{14} it implies $$\lim_{N\rightarrow \infty}\sum^N_{n=1}\left(c_2c^2_{N,2}\log N\right)^n=0$$ thus we see Lemmas \ref{14}, \ref{15} and \ref{16} hold with suitable changes. We conclude that for dimension $d=2$, $\frac{\langle \omega(N)^2\rangle_{N,h}}{N}\rightarrow 1$ in probability.

\section{Other Results}
In this section, we are going to show other results in the diffusive regime.

\begin{te}\label{28}  With rescaling of the polymer density by $c_{N,d}$ for $d=1,2$, there exists normalizing constants $a_{N,d}$ such that $$a_{N,d}\left(Z_N-1\right)\Rightarrow \xi$$ where $\xi$ is some Gaussian random variable. \end{te}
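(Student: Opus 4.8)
The plan is to establish a central limit theorem for $Z_N-1$ by analyzing its chaos expansion, which Lemma \ref{2} already sets up. Writing $Z_N-1 = \sum_{n=1}^N g_n$ where $g_n = \sum_{1\le i_1<\cdots<i_n\le N} c_{N,d}^n \int \prod_{k=1}^n h(i_k,\omega(i_k))\,dP_0^N(\omega)$, the terms $g_n$ are orthogonal in $L^2(Q)$ (this was shown inside the proof of Lemma \ref{2}), and $E_Q(g_n^2)$ equals the $n$-th term of the expansion in Lemma \ref{2}, which by Propositions \ref{d12M}/\ref{d22M} is bounded by $(c_d \lambda_N)^n$ where $\lambda_N := c_{N,d}^2 N^{1/2}$ for $d=1$ and $c_{N,d}^2\log N$ for $d=2$, with $\lambda_N\to 0$. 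The natural normalization is $a_{N,d} = (E_Q((Z_N-1)^2))^{-1/2}$; since $E_Q((Z_N-1)^2) = \sum_{n\ge 1} E_Q(g_n^2)$ and the leading term $E_Q(g_1^2) = c_{N,d}^2 \sum_{i=1}^N \sum_x p_0(i,x)^2 \sim c_d'\lambda_N$ dominates (the $n\ge 2$ terms sum to $O(\lambda_N^2)$), we get $a_{N,d}^2 \sim (c_d'\lambda_N)^{-1}$, so $a_{N,d}(Z_N-1)$ is asymptotically $a_{N,d}\,g_1$ plus a remainder of $L^2$-norm $O(\lambda_N^{1/2})\to 0$.

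First I would reduce, via this $L^2$ estimate, to proving $a_{N,d}\, g_1 \Rightarrow \xi$ with $\xi$ standard Gaussian. Now $a_{N,d}\, g_1 = a_{N,d} c_{N,d} \sum_{i=1}^N \sum_x p_0(i,x)\, h(i,x)$ is a sum over $n=1,\dots,N$ of independent mean-zero random variables $Y_i := a_{N,d} c_{N,d}\sum_x p_0(i,x) h(i,x)$ (independent across $i$ because the $h(i,\cdot)$ are independent across the time coordinate). So a triangular-array central limit theorem applies: I would verify the Lindeberg condition. The variances $\mathrm{Var}(Y_i) = a_{N,d}^2 c_{N,d}^2 \sum_x p_0(i,x)^2$ sum to $1+o(1)$ by construction, and each individual summand is uniformly small: $|Y_i| \le a_{N,d} c_{N,d}\sum_x p_0(i,x) = a_{N,d} c_{N,d}$, and $a_{N,d} c_{N,d} \sim (c_d' \lambda_N)^{-1/2} c_{N,d} = (c_d')^{-1/2} \lambda_N^{-1/2} c_{N,d} \to 0$ in both dimensions (e.g. for $d=1$, $\lambda_N^{-1/2} c_{N,1} = c_{N,1}^{1-1} N^{-1/4}\cdot(\dots)$, which vanishes), so the summands are bounded by a null sequence and Lindeberg is immediate.

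The steps in order: (1) record the chaos decomposition $Z_N-1 = \sum_{n=1}^N g_n$ and its orthogonality; (2) define $a_{N,d} = (E_Q((Z_N-1)^2))^{-1/2}$ and use Propositions \ref{d12M}/\ref{d22M} together with the explicit computation of $E_Q(g_1^2)$ to show $E_Q((Z_N-1)^2) = (c_d'+o(1))\lambda_N$ and $a_{N,d} c_{N,d}\to 0$; (3) show $a_{N,d}(Z_N - 1 - g_1) \to 0$ in $L^2(Q)$, hence in probability; (4) apply the Lindeberg--Feller theorem to the independent array $\{Y_i\}_{i=1}^N$ to get $a_{N,d}\, g_1 \Rightarrow N(0,1)$; (5) combine via Slutsky. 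The main obstacle is step (2): one must pin down the exact asymptotics of $E_Q(g_1^2) = c_{N,d}^2 \sum_{i=1}^N \sum_x p_0(i,x)^2$, which requires the sharper form of the local limit estimate \eqref{GLE} rather than the crude uniform bound used in Lemmas \ref{4} and \ref{17}; namely $\sum_x p_0(i,x)^2 = p_{2i}(0) \sim 2(d/4\pi i)^{d/2}$, so that $\sum_{i=1}^N \sum_x p_0(i,x)^2$ is asymptotically a constant times $N^{1/2}$ ($d=1$) or $\log N$ ($d=2$), giving the precise $c_d'$. Everything else — orthogonality, the $L^2$ remainder bound, and Lindeberg — follows from estimates already in the paper or is routine.
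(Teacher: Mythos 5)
Your proposal is correct and follows essentially the same route as the paper: decompose $Z_N-1$ into the first chaos term $g_1=\sum_k f_k$ plus the higher-order remainder, kill the remainder in $L^2$ using the geometric bound $\sum_{n\ge 2}(c_d\lambda_N)^n=O(\lambda_N^2)$, and apply Lindeberg--Feller to the independent array $Y_i=a_{N,d}c_{N,d}\sum_x p_0(i,x)h(i,x)$, with the variance asymptotics coming from $\sum_x p_0^2(i,x)=p_0(2i,0)$ (the paper's Lemma \ref{25}) and the local limit theorem. The only differences are cosmetic: you normalize by the exact standard deviation so the limit is $N(0,1)$ rather than fixing $a_{N,d}=\lambda_N^{-1/2}$ as the paper does, and your Lindeberg check via the deterministic bound $|Y_i|\le a_{N,d}c_{N,d}\to 0$ is the same observation the paper phrases as the event $\{a_{N,d}|f_k|>\epsilon\}$ being empty for large $N$.
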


First, we have the following lemma.

\begin{lm}\label{25} For $x_1,\ldots, x_n\in \mathbb{Z}^d$, \begin{eqnarray*}\sum_{x_1,\ldots, x_n}\prod^n_{k=1}p^2_0(i_k-i_{k-1}, x_k-x_{k-1})=\prod^n_{k=1}p_0(2(i_k-i_{k-1}),0)\end{eqnarray*}\end{lm}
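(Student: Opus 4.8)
The plan is to reduce the multi-site sum to a product of single-step sums by working from the outermost index $x_n$ inward, and then to identify each single-step sum $\sum_{y}p_0^2(m,y)$ with $p_0(2m,0)$ via the Chapman--Kolmogorov (convolution) identity together with the spatial symmetry $p_0(m,-y)=p_0(m,y)$ of the simple random walk. First I would write out the left-hand side and peel off the last sum:
\[
\sum_{x_1,\ldots,x_n}\prod_{k=1}^n p_0^2(i_k-i_{k-1},x_k-x_{k-1})
=\sum_{x_1,\ldots,x_{n-1}}\prod_{k=1}^{n-1}p_0^2(i_k-i_{k-1},x_k-x_{k-1})\sum_{x_n}p_0^2(i_n-i_{n-1},x_n-x_{n-1}).
\]
By translation invariance the inner sum $\sum_{x_n}p_0^2(i_n-i_{n-1},x_n-x_{n-1})$ equals $\sum_{y}p_0^2(i_n-i_{n-1},y)$, which is independent of $x_{n-1}$; this detaches it cleanly and leaves the same expression with $n$ replaced by $n-1$, so a trivial induction on $n$ gives $\prod_{k=1}^n\bigl(\sum_y p_0^2(i_k-i_{k-1},y)\bigr)$.

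Next I would handle the single factor. Fix $m=i_k-i_{k-1}$ and observe that, since $p_0(m,\cdot)$ is the law of the endpoint of an $m$-step walk and the walk is symmetric, $p_0(m,y)=p_0(m,-y)$. Hence
\[
\sum_{y}p_0^2(m,y)=\sum_{y}p_0(m,y)\,p_0(m,-y)=\sum_y p_0(m,y)\,p_0(m,0-y)=(p_0(m,\cdot)*p_0(m,\cdot))(0)=p_0(2m,0),
\]
the last equality being the Chapman--Kolmogorov relation $p_0(m+m,0)=\sum_y p_0(m,y)p_0(m,0-y)$ for the time-homogeneous walk (this is just the statement that the sum of two independent $m$-step walks is a $2m$-step walk). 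Substituting $m=i_k-i_{k-1}$ and taking the product over $k$ yields $\prod_{k=1}^n p_0(2(i_k-i_{k-1}),0)$, which is the claim.

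I do not expect a genuine obstacle here: the only points needing care are that the convolution identity is applied to the \emph{same} walk at the \emph{same} number of steps (so that $p_0(m,\cdot)*p_0(m,\cdot)=p_0(2m,\cdot)$ literally holds, using independence of increments) and that the parity condition $x_1+\cdots+x_d+m\equiv 0 \bmod 2$ implicit in $p_0(m,y)\neq 0$ is compatible — indeed $2m$ is always even, so $p_0(2m,0)$ is a legitimate nonzero quantity and the identity is consistent. The symmetry $p_0(m,y)=p_0(m,-y)$ is immediate from the $\pm e_j$ steps having equal weight. Everything else is the bookkeeping of the induction already sketched.
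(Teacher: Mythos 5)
Your proposal is correct and follows essentially the same route as the paper: both reduce the sum to $\prod_k\sum_y p_0^2(i_k-i_{k-1},y)$ by translation invariance, and both identify $\sum_y p_0^2(m,y)=p_0(2m,0)$ via Chapman--Kolmogorov after using the symmetry of the walk (you state it directly as $p_0(m,y)=p_0(m,-y)$, while the paper phrases the same fact as a path-reflection argument giving $p_0(m,y)=p_y(m,0)$). No gaps.
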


\begin{proof} Note that for transition probability $p_0(n,x)$ of the simple random walk starting at $0$ and ending at $x$ at time $n$, by spatial homogeneity it is same as the transition probability $p_x(n,2x)$ of the simple random walk starting at $x$ and ending at $2x$ at time $n$. Furthermore, by reflecting each step walk takes to reach from $x$ to $2x$, for example in dimension $d=2$ if original walk goes up, then the reflecting walk goes down and if original walk goes right, then the reflecting walk goes left, we get a reflecting walk starting at $x$ and ending at $0$ at time $n$ with transition probability $p_x(n,0)$ such that $$p_0(n,x)=p_x(n,2x)=p_x(n,0)$$ Using Chapman-Kolmogorov equality for the simple random walk, we have $$\sum_xp^2_0(n,x)=\sum_xp_0(n,x)p_0(n,x)=\sum_xp_0(n,x)p_x(n,0)=p_0(2n,0)$$ We conclude that $$\sum_{x_1,\ldots, x_n}\prod^n_{k=1}p^2_0(i_k-i_{k-1}, x_k-x_{k-1}) $$ $$=\sum_{x_1}p^2_0(i_1,x_1)\cdots\sum_{x_{n-1}}p^2_0(i_{n-1}-i_{n-2},x_{n-1}-x_{n-2})
\sum_{x_n}p^2_0(i_n-i_{n-1},x_n-x_{n-1})$$ $$=\sum_{x_1}p^2_0(i_1,x_1)\cdots\sum_{x_{n-1}}p^2_0(i_{n-1}-i_{n-2},x_{n-1}-x_{n-2})\sum_{x_n-x_{n-1}}p^2_0(i_n-i_{n-1},x_n-x_{n-1})$$
$$=\sum_{x_1}p^2_0(i_1,x_1)\cdots\sum_{x_{n-1}}p^2_0(i_{n-1}-i_{n-2},x_{n-1}-x_{n-2})\sum_{x_n}p^2_0(i_n-i_{n-1},x_n)$$ $$=\sum_{x_1}p^2_0(i_1,x_1)\cdots\sum_{x_{n-1}}p^2_0(i_{n-1}-i_{n-2},x_{n-1}-x_{n-2})p_0(2(i_n-i_{n-1}),0)$$
$$=\prod^n_{k=1}p_0(2(i_k-i_{k-1}),0)$$
\end{proof}

To show Theorem \ref{28}, we write the partition function as a sum of two parts

\begin{lm}\label{29} $Z_N-1=\sum^N_{k=1}f_k+R_N$ \end{lm}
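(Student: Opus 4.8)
The plan is to split the second-moment expansion of $Z_N$ from Lemma \ref{2} (equivalently, the chaos expansion of $Z_N$ itself) into a ``one-time-point'' piece and a remainder. Starting from the expansion written in the proof of Lemma \ref{2}, $Z_N = \sum_{n=0}^N g_n$ with $g_n = \sum_{1\le i_1<\cdots<i_n\le N} c_{N,d}^n \int \prod_{k=1}^n h(i_k,\omega(i_k))\, dP_0^N(\omega)$, so $Z_N - 1 = g_1 + \sum_{n=2}^N g_n$. I would define $f_k$ to be the contribution to $g_1$ coming from time $k$, namely $f_k = c_{N,d}\, h(k,\,\cdot\,)$ integrated against the walk, i.e. $f_k = c_{N,d}\sum_x p_0(k,x)\, h(k,x)$, so that $g_1 = \sum_{k=1}^N f_k$, and then set $R_N = \sum_{n=2}^N g_n$ to be the higher-chaos remainder. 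With these definitions the identity $Z_N - 1 = \sum_{k=1}^N f_k + R_N$ is immediate from the expansion already established.

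The steps, in order, are: first, recall the chaos expansion $Z_N = \sum_{n=0}^N g_n$ from the proof of Lemma \ref{2}, noting $g_0 = 1$. Second, unpack $g_1$ using the same ``integrate the walk against the environment'' manipulation used in Lemma \ref{2}: $g_1 = c_{N,d}\int \sum_{1\le i_1 \le N} h(i_1,\omega(i_1))\, dP_0^N(\omega) = c_{N,d}\sum_{k=1}^N \sum_x p_0(k,x) h(k,x)$, and call the inner ($k$-indexed) summand $f_k$. Third, collect everything with $n\ge 2$ into $R_N$. Fourth — and this is the only part that takes any thought — record the properties of this decomposition that the proof of Theorem \ref{28} will actually consume: the $f_k$ are independent (they depend on disjoint time-slices of the environment), mean zero, with $E_Q f_k^2 = c_{N,d}^2 \sum_x p_0(k,x)^2 = c_{N,d}^2\, p_0(2k,0)$ by the reflection/Chapman--Kolmogorov computation of Lemma \ref{25} with $n=1$; and $E_Q R_N^2 = \sum_{n=2}^N c_{N,d}^{2n}\sum_{1\le i_1<\cdots<i_n\le N}\prod_k p_0(2(i_k-i_{k-1}),0)$, which by Lemmas \ref{5}/\ref{18} is $O\big(\sum_{n\ge 2}(c\, c_{N,d}^2 \mu_N)^n\big)$ with $\mu_N = N^{1/2}$ or $\log N$, hence negligible compared to the order-$n=1$ term $\sum_k E_Q f_k^2 \asymp c_{N,d}^2\mu_N$ under \eqref{SLG}.

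There is essentially no obstacle in proving Lemma \ref{29} itself: it is a bookkeeping identity, and the content is entirely in \emph{choosing} the right $f_k$ (so that they are i.i.d.\ mean-zero and their variances sum to the dominant contribution to $E_Q(Z_N-1)^2$) and the right $R_N$ (so that $E_Q R_N^2$ is lower order). The genuinely hard step is deferred to the proof of Theorem \ref{28}: applying a Lindeberg/Lyapunov central limit theorem to the triangular array $\{a_{N,d} f_k\}_{k=1}^N$ with $a_{N,d} = (\sum_{k=1}^N E_Q f_k^2)^{-1/2} = \big(c_{N,d}^2\sum_{k=1}^N p_0(2k,0)\big)^{-1/2}$ — one must check the Lyapunov condition $a_{N,d}^4 \sum_k E_Q f_k^4 \to 0$, using the crude bound $p_0(k,x)\le c_d\,\mu_k^{-1}$ (with $\mu_k = k^{1/2}$ in $d=1$, $k$ in $d=2$) to control $E_Q f_k^4 = c_{N,d}^4\big(\sum_x p_0(k,x)^2\big)^2 \lesssim c_{N,d}^4\,\mu_k^{-2}$, so that $a_{N,d}^4\sum_k E_Q f_k^4 \lesssim \big(\sum_k \mu_k^{-1}\big)^{-2}\sum_k \mu_k^{-2}\to 0$ — and separately checking that $a_{N,d} R_N \to 0$ in probability via Chebyshev from the $E_Q R_N^2$ bound above. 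Lemma \ref{29} is the clean structural input that makes all of this possible.
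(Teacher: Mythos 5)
Your decomposition is exactly the paper's: expand $Z_N=\sum_{n=0}^N g_n$, note $g_0=1$, write $g_1=\sum_{k=1}^N f_k$ with $f_k=c_{N,d}\sum_x p_0(k,x)h(k,x)$, and set $R_N=\sum_{n=2}^N g_n$. The extra properties you record (independence, variances, the $E_QR_N^2$ bound) belong to the proofs of Propositions \ref{30} and \ref{31} rather than to this lemma, but the proof of the identity itself matches the paper.
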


\begin{proof} \begin{eqnarray*}Z_N-1&=&g_1+\sum^N_{n=2}g_n\\&=&
\int \sum^N_{k=1}c_{N,d}h(k,\omega(k))dP^N_0(\omega)+\sum^N_{n=2}g_n
\\&=& \sum^N_{k=1}\sum_xc_{N,d}h(k,x)p_0(k,x)+R_N\\&=&\sum^N_{k=1}f_k+R_N\end{eqnarray*}
where the $g_n$ are as in Lemma \ref{2}
\end{proof}

By the following two propositions, Theorem \ref{28} follows since if $X_n\Rightarrow X$, $Y_n\Rightarrow a$ where $a$ is a constant, then $X_n+Y_n\Rightarrow X+a$.

\begin{pr}\label{30} $\sum^N_{k=1}a_{N,d}f_k\Rightarrow \xi$ where $\xi$ is some Gaussian random variable.\end{pr}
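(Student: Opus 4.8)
The plan is to prove Proposition \ref{30} by the Lindeberg--Feller central limit theorem for triangular arrays, applied to the array of independent summands that make up $\sum_{k=1}^N f_k$. First I would unpack the structure of $f_k$: from Lemma \ref{29} we have $f_k = \sum_x c_{N,d} h(k,x) p_0(k,x)$. Since the $h(k,x)$ are independent across \emph{all} pairs $(k,x)$, the doubly-indexed collection $\{c_{N,d}\, h(k,x)\, p_0(k,x) : 1\le k\le N,\ x\in\mathbb{Z}^d\}$ is a (countable) family of independent, mean-zero random variables, and $\sum_{k=1}^N f_k$ is just their total sum. So it suffices to run a CLT for this array. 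Each entry has variance $c_{N,d}^2\, p_0(k,x)^2$, so the total variance is
\begin{eqnarray*}
\sigma_N^2 := \mathrm{Var}\Big(\sum_{k=1}^N f_k\Big) = c_{N,d}^2 \sum_{k=1}^N \sum_x p_0(k,x)^2 = c_{N,d}^2 \sum_{k=1}^N p_0(2k,0),
\end{eqnarray*}
using Lemma \ref{25} with $n=1$. In $d=1$, $p_0(2k,0)\asymp k^{-1/2}$ so $\sigma_N^2 \asymp c_{N,1}^2 N^{1/2}$; in $d=2$, $p_0(2k,0)\asymp k^{-1}$ so $\sigma_N^2 \asymp c_{N,2}^2 \log N$. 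By the scaling hypothesis \eqref{SLG}, $\sigma_N^2\to 0$, which is why we must introduce $a_{N,d}$: I would set $a_{N,d} := \sigma_N^{-1}$ (or any asymptotically equivalent constant), so that $\mathrm{Var}(a_{N,d}\sum_k f_k)\to 1$, and then show $a_{N,d}\sum_k f_k \Rightarrow \xi \sim N(0,1)$.

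The main verification is the Lindeberg condition. Because $|h(k,x)| = 1$, each normalized summand $a_{N,d}\, c_{N,d}\, h(k,x)\, p_0(k,x)$ is bounded in absolute value by $a_{N,d}\, c_{N,d}\, \max_{k,x} p_0(k,x)$. Now $p_0(k,x) \le p_0(k,0) \le C_d\, k^{-d/2}$ by the local CLT estimate \eqref{GLE}, and this is maximized (up to constants) over $k\ge 1$ at $k=1$, giving a bound $\le C_d$; more carefully one wants the bound to go to zero, and indeed the relevant quantity is $a_{N,d}\, c_{N,d}$, which I would check tends to $0$: in $d=1$, $a_{N,1} \asymp (c_{N,1}^2 N^{1/2})^{-1/2} = c_{N,1}^{-1} N^{-1/4}$, so $a_{N,1} c_{N,1} \asymp N^{-1/4}\to 0$; in $d=2$, $a_{N,2} c_{N,2} \asymp (\log N)^{-1/2}\to 0$. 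Hence $\max_{k,x} |a_{N,d} c_{N,d} h(k,x) p_0(k,x)| \to 0$ while the normalized variance stays equal to $1$, and the Lindeberg ratio $\sum \mathbb{E}\big[\,|a_{N,d} c_{N,d} h(k,x) p_0(k,x)|^2\, \mathbf{1}_{\{|\cdot|>\varepsilon\}}\big]$ vanishes for every $\varepsilon>0$ once $N$ is large enough that the uniform bound drops below $\varepsilon$ (at which point all indicators are $0$). This is the step I expect to be the main obstacle, not because it is deep but because one must be careful that the $k=1,2$ small-time terms (where the Gaussian local CLT is crude and $p_0(k,x)$ is not small) do not spoil the uniform-smallness estimate; the resolution is that it is the \emph{product} $a_{N,d} c_{N,d}$, carrying the vanishing factor $c_{N,d}$, that controls the size of \emph{every} summand, so the bounded-support terms are harmless.

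With the Lindeberg condition in hand, the Lindeberg--Feller theorem gives $a_{N,d}\sum_{k=1}^N f_k \Rightarrow N(0,1)$ directly. A couple of bookkeeping points I would address: the array is indexed by an infinite set $\{(k,x): x\in\mathbb{Z}^d\}$ for each $N$, so I would phrase the CLT for a countable independent array (or first truncate $|x|\le M$, let $M\to\infty$, and control the tail via $\sum_{|x|>M} p_0(k,x)^2 \to 0$ by dominated convergence using $\sum_x p_0(k,x)^2 = p_0(2k,0)<\infty$); either way this is routine. Also, the constant $a_{N,d}$ is defined only up to asymptotic equivalence, so I would fix $a_{N,d} = \big(c_{N,d}^2\sum_{k=1}^N p_0(2k,0)\big)^{-1/2}$ and note that the limit $\xi$ is then standard normal; with a different normalization one gets $\xi \sim N(0,\sigma^2)$ for the corresponding $\sigma$. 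This completes the proof of Proposition \ref{30}.
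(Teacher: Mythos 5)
Your proof is correct and follows essentially the same route as the paper: both apply the Lindeberg--Feller theorem with $a_{N,d}\asymp\bigl(c_{N,d}^2\sum_{k\le N}p_0(2k,0)\bigr)^{-1/2}$, compute the variance via $\sum_x p_0^2(k,x)=p_0(2k,0)$ (Lemma \ref{25}), and dispose of the Lindeberg condition by observing that every summand is deterministically bounded by a multiple of $a_{N,d}c_{N,d}\to 0$, so the indicators vanish identically for large $N$. The only difference is that you index the triangular array by the individual pairs $(k,x)$ rather than by the time-slice sums $f_k$ as the paper does; since $p_0(k,\cdot)$ is supported on $\{|x|\le k\}$ the array is finite for each $N$, so the infinite-array/truncation issue you flag does not actually arise.
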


\begin{proof} By definition, $f_k=c_{N,d}\sum_x h(k,x)p_0(k,x)$, if we let $X_{k,N}=a_{N,d}f_k$, then to show proposition it suffices to check conditions in the Lindeberg-Feller Theorem are satisfied, i.e. $\sum^N_{k=1} E_QX^2_{k,N}\rightarrow c$ where $c>0$, and for all $\epsilon>0$, $\lim_N\sum^N_{k=1}E_Q\left(X^2_{k,N};\ |X_{k,N}|>\epsilon\right)=0$ By direct computations, we find \begin{eqnarray*}E_QX^2_{k,N}&=&a^2_{N,d}c^2_{N,d}
E_Q\left(\sum_x h^2(k,x)p^2_0(k,x)+\sum_{x\neq x'}h(k,x)h(k,x')p_0(k,x)p_0(k,x')\right)\\&=&a^2_{N,d}c^2_{N,d}\sum_xp^2_0(k,x)\\&=&a^2_{N,d}c^2_{N,d}p_0(2k,0)\end{eqnarray*} where last equality follows from Lemma \ref{25}. Using estimate of the transition probability $p_0(n,x)$ as in \ref{GLE}, in $d=1$, $p_0(2k,0)=\pi^{-1/2}k^{-1/2}+r_{2k}(0)$, where $|r_{2k}(0)|\leq c_1k^{-3/2}$ and in $d=2$, $p_0(2k,0)=\pi^{-1}k^{-1}+r_{2k}(0)$, where $|r_{2k}(0)|\leq c_2k^{-2}$, we see in $d=2$, $\sum^N_{k=1}E_QX^2_{k,N}=a^2_{N,2}c^2_{N,2}\sum^N_{k=1}p_0(2k,0)=a^2_{N,2}c^2_{N,2}\left(\sum^N_{k=1}\pi^{-1}k^{-1}+r_{2k}(0)\right)$. If we take $a_{N,2}=\left(c^2_{N,2}\log N\right)^{-1/2}$, then $$a^2_{N,2}c^2_{N,2}\left(\sum^N_{k=1}\pi^{-1}k^{-1}+r_{2k}(0)\right)\rightarrow \pi^{-1}$$
(because $1-(N+1)^{-1}\leq \sum^N_{k=1}k^{-2}\leq 2-N^{-1}$, so $a^2_{N,2}c^2_{N,2}\sum^{N}_{k=1}k^{-2}\rightarrow 0$).

Next, for given $\epsilon>0$, we also find \begin{eqnarray*} E_Q\left(X^2_{k,N};\ |X_{k,N}|\geq \epsilon\right)&=&E_Q\left(a^2_{N,2}f^2_k;\ a_{N,2}|f_k|>\epsilon\right)\\&\leq & a^2_{N,2}c^2_{N,2}E_Q\left(\sum_xp^2_0(k,x)1_{a_{N,2}|f_k|>\epsilon}\right)\\&+&a^2_{N,2}c^2_{N,2}E_Q\left(
\sum_{x\neq x'}|h(k,x)h(k,x')p_0(k,x)p_0(k,x')|1_{a_{N,2}|f_k|>\epsilon}\right)\\&=&
a^2_{N,2}c^2_{N,2}Q(a_{N,2}|f_k|>\epsilon)\left(p_0(2k,0)+\sum_{x\neq x'}p_0(k,x)p_0(k,x')\right)
\end{eqnarray*}
But $Q(a_{N,2}|f_k|>\epsilon)=Q(|\sum_xh(k,x)p_0(k,x)|>a^{-1}_{N,2}c^{-1}_{N,2}\epsilon)\leq Q(\sum_x|h(k,x)|p_0(k,x)>a^{-1}_{N,2}c^{-1}_{N,2}\epsilon)=Q(1>a^{-1}_{N,2}c^{-1}_{N,2}\epsilon)=Q(1>(\log N)^{1/2}\epsilon)=0$ for $N$ large. Thus $$\lim_N\sum^N_{k=1}E_Q\left(X^2_{k,N};\ |X_{k,N}|>\epsilon\right)=0$$

Similarly, we can check conditions in the Lindeberg-Feller theorem are satisfied for $d=1$ if we take $a_{N,1}=(c^2_{N,1}N^{1/2})^{-1/2}$.
\end{proof}

\begin{pr}\label{31} $a_{N,d}R_N\Rightarrow 0$ \end{pr}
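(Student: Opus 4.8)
The plan is to show that $R_N$ is negligible compared to the Gaussian fluctuation $\sum_{k=1}^N f_k$, in the sense that $a_{N,d}R_N\to 0$ in probability, which for the purposes of the stated weak convergence is enough (convergence in probability to a constant implies convergence in distribution). Since $R_N=\sum_{n=2}^N g_n$ with the $g_n$ as in Lemma \ref{2}, and since $E_Q g_n=0$ for every $n\geq 1$ (the $h(n,x)$ have mean zero) and $E_Q g_n g_m=0$ for $n\neq m$ (as shown in the proof of Lemma \ref{2}), I would first compute $E_Q(R_N^2)=\sum_{n=2}^N E_Q(g_n^2)$. But $\sum_{n=2}^N E_Q(g_n^2)=E_Q(Z_N^2)-1-E_Q(g_1^2)$, and from Lemma \ref{2} together with Proposition \ref{d12M} i) (resp. Proposition \ref{d22M} i)) we have $E_Q(Z_N^2)-1-E_Q(g_1^2)\leq \sum_{n=2}^N\bigl(c_d c_{N,d}^2\,\theta_N\bigr)^n$, where $\theta_N=N^{1/2}$ in $d=1$ and $\theta_N=\log N$ in $d=2$.

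The key point is that this tail sum starts at $n=2$, so it is of order $\bigl(c_{N,d}^2\theta_N\bigr)^2$ rather than $c_{N,d}^2\theta_N$: writing $f(N)=c_d c_{N,d}^2\theta_N\to 0$ by \eqref{SLG}, for $N$ large enough that $f(N)<1/2$ one has $\sum_{n=2}^N f(N)^n\leq f(N)^2/(1-f(N))\leq 2f(N)^2$. Hence $E_Q(R_N^2)\leq 2f(N)^2 = O\bigl((c_{N,d}^2\theta_N)^2\bigr)$. On the other hand $a_{N,d}^2=(c_{N,d}^2\theta_N)^{-1}$ by the choice made in the proof of Proposition \ref{30}, so $a_{N,d}^2\,E_Q(R_N^2)\leq 2 c_d^2\, c_{N,d}^2\theta_N\to 0$. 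By Chebyshev's inequality, $Q(|a_{N,d}R_N|>\epsilon)\leq a_{N,d}^2 E_Q(R_N^2)/\epsilon^2\to 0$ for every $\epsilon>0$, so $a_{N,d}R_N\to 0$ in probability and therefore in distribution, which is the claim.

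The only delicate point is bookkeeping: one must make sure that the $n=1$ term is genuinely removed before applying the second-moment bound, since $E_Q(g_1^2)=c_{N,d}^2 p_0(2,0)\asymp c_{N,d}^2$ is not itself small relative to $a_{N,d}^{-2}$ — it is precisely the term producing the Gaussian limit and must not be lumped into $R_N$. Equivalently, one uses $E_Q(R_N^2)=\sum_{n\geq 2}E_Q(g_n^2)$ directly and bounds each $E_Q(g_n^2)$ via Lemma \ref{2}, Lemma \ref{4} (resp. Lemma \ref{17}) and Lemma \ref{5} (resp. Lemma \ref{18}) to get $E_Q(g_n^2)\leq f(N)^n$, summing from $n=2$. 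No genuinely new estimate is needed beyond what is already proved; the argument is a quantitative restatement of "the higher chaos terms are subleading", and the factor $f(N)^2$ against the normalization $f(N)^{-1}$ is exactly what makes it work in the regime \eqref{SLG}.
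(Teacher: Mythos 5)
Your proposal is correct and follows essentially the same route as the paper: orthogonality of the chaos terms $g_n$ gives $E_Q(R_N^2)=\sum_{n\geq 2}E_Q(g_n^2)$, each term is bounded by $(c_dc_{N,d}^2\theta_N)^n$ via the same lemmas, and Chebyshev with the normalization $a_{N,d}^2=(c_{N,d}^2\theta_N)^{-1}$ yields convergence in probability, hence in distribution. Your explicit remark that the sum must start at $n=2$ (since the $n=1$ term is exactly the Gaussian contribution) is the same point the paper exploits when it factors out one power of $c_2c_{N,2}^2\log N$.
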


\begin{proof}Note that $a_{N,d}R_N\Rightarrow 0$ if and only if $a_{N,d}R_N\rightarrow 0$ in probability and $a_{N,d}R_N\rightarrow 0$ in probability if $E_Q(a_{N,d}R_N)^2\rightarrow 0$ by Chebyshev inequality.

We show the proposition for $d=2$, and it is similar for $d=1$.

By definition, $R_N=\sum^N_{n=2}g_n$, so $E_Q(a_{N,d}R_N)^2=a^2_{N,d}E_Q\left(\sum^N_{n=2}g^2_n+\sum_{n\neq m} g_ng_m\right)$. From Lemma \ref{2}, we know that for $n\neq m$, $E_Qg_ng_m=0$, and $E_Qg^2_n=\sum_{1\leq i_1<\cdots<i_n\leq N}c^{2n}_{N,2}\prod^n_{k=1}p_0(2(i_k-i_{k-1}),0)\leq \left(c_2c^2_{N,2}\log N\right)^n$. Recall $a_{N,2}=\left(c^2_{N,2}\log N\right)^{-1/2}$ for $d=2$ so $$a^2_{N,2}E_Q\sum^N_{n=2}g^2_n\leq\sum^N_{n=2}
a^2_{N,2}(c_2c^2_{N,2}\log N)^n=\sum^N_{n=2}c_2(c_2c^2_{N,2}\log N)^{n-1}\rightarrow 0$$
\end{proof}

\end{document}